\def\@seccntformat#1{\@ifundefined{#1@cntformat}%
   {\csname the#1\endcsname\quad}  
   {\csname #1@cntformat\endcsname}
}
\let\oldappendix\appendix 
\renewcommand\appendix{%
    \oldappendix
    \newcommand{\section@cntformat}{\appendixname~\thesection\quad}
}
\def\cyr{%
\renewcommand\rmdefault{wncyr}%
\renewcommand\sfdefault{wncyss}%
\renewcommand\encodingdefault{OT2}%
\normalfont
\selectfont}
\DeclareMathAlphabet{\zap}{OT1}{pzc}{m}{it}
\DeclareTextFontCommand{\textcyr}{\cyr}
\newcommand{\rad}{\text{\cyr   ya}}
\newcommand{\rid}{\text{\cyr   i}}
\def\CC{\mathbb C}
\newtheorem{main}{Theorem}
\DeclareMathOperator{\Diff}{\zap{Diff}}
\DeclareMathOperator{\Hom}{Hom}
\DeclareMathOperator{\tr}{tr}
\newtheorem{thm}{Theorem}
\newtheorem{lem}{Lemma}
\newtheorem{prop}{Proposition}
\newtheorem{*corem}{Corollary}
\def\ZZ{{\mathbb Z}}
\def\RR{{\mathbb R}}
\def\CP{{\mathbb C \mathbb P}}
\def\RP{{\mathbb R \mathbb P}}
\begin{document}

\title{Einstein Metrics, Conformal Curvature, and Anti-Holomorphic Involutions}

\author{Claude LeBrun\thanks{Supported 
in part by  NSF grant DMS-1906267.}\\ 
Stony
 Brook
 University} 
  
\date{}

\maketitle

\begin{abstract} 
Building on  previous results  \cite{lebdet,pengwu}, we complete the classification of 
compact oriented Einstein $4$-manifolds with $\det (W^+) > 0$.  There are, up to diffeomorphism,  exactly 
15   manifolds that carry such  metrics, and, on  each of these manifolds,    such  metrics sweep out 
exactly one connected component of the corresponding Einstein moduli space. 
 \end{abstract}

%
%

\section{Introduction}

A Riemannian metric $h$ is said \cite{bes}  to be {\em Einstein} if, for some real constant $\lambda$, 
 it satisfies  the {\em Einstein equation}
\begin{equation*}
\label{einstein}
r= \lambda h,
\end{equation*}
where $r$ is the Ricci tensor of $h$.
Given a smooth compact $n$-manifold $M$,  henceforth  always assumed to be connected and without boundary, 
one  would like   to completely understand the  Einstein moduli
space
$$\mathscr{E}(M) = \{ \mbox{Einstein metrics on  } M\}/(\Diff (M) \times \RR^+),$$
 where the diffeomorphism group $\Diff (M)$  acts on  metrics  via pull-backs, and where the  positive reals $\RR^+$ 
act  by  rescaling.
This moduli problem is well understood \cite{thurston,thur3} in dimensions $n\leq 3$, 
because in these low dimensions the Einstein equation is actually equivalent  to just requiring  the sectional curvature to be  constant. 
By contrast,
when  $n\geq 5$, the  abundance of currently-available examples 
of ``exotic'' Einstein metrics on familiar manifolds \cite{bohm,bgk,wazi}  seems to indicate
 that the problem could very well turn out 
  to be  fundamentally     intractable in high dimensions.  
On the other hand, 
 there are certain   specific $4$-manifolds, such as  real and complex-hyperbolic $4$-manifolds,   the $4$-torus, and $K3$, 
where  the  Einstein  moduli space  $\mathscr{E}(M)$ is  explicitly known, and    in fact turns out to be 
 {connected} \cite{bergb,bcg,hit,lmo}. 
 This  provides clear  motivation for the  intensive  study of  Einstein moduli spaces in dimension four.   
 
The idiosyncratic features of $4$-dimensional Riemannian geometry  are generally  attributable to   the 
 failure of  the  Lie group    $\mathbf{SO}(4)$ to be simple; instead,  its Lie algebra  decomposes as a direct   sum of proper subalgebras:
$$\mathfrak{so}(4) = \mathfrak{so}(3) \oplus  \mathfrak{so}(3).$$
Because $\mathfrak{so}(4)$   and  $\wedge^2\RR^4$ can both be realized as  the space of skew $4 \times 4$   matrices,
this  leads to a natural decomposition 
$$
\Lambda^2 = \Lambda^+ \oplus \Lambda^-  
$$
of 
the bundle of $2$-forms on an oriented Riemannian $4$-manifold $(M,h)$. Since  the sub-bundles $\Lambda^\pm$
coincide with the $(\pm 1)$-eigenspaces  of 
the Hodge star 
operator $\star: \Lambda^2 \to \Lambda^2$,   sections of $\Lambda^+$ are called 
self-dual $2$-forms, while sections of 
$\Lambda^-$ are  called 
anti-self-dual $2$-forms. But because  
 the  Riemann curvature tensor can be naturally   identified with 
 a self-adjoint linear map 
$${\mathcal R} : \Lambda^2 \to \Lambda^2,$$ 
the curvature of $(M^4, h)$ can   consequently be  decomposed into  four   pieces 
$$
{\mathcal R}=
\left(
\mbox{
\begin{tabular}{c|c}
&\\
$W^++\frac{s}{12}I$&$\mathring{r}$\\ &\\
\cline{1-2}&\\
$\mathring{r}$ & $W^-+\frac{s}{12}I$\\&\\
\end{tabular}
} \right) ,
$$
corresponding to different irreducible representations of  $\mathbf{SO}(4)$. 
Here   $s$ is the {scalar curvature} and 
$\mathring{r}$ is  the trace-free   
      Ricci curvature, while 
$W^\pm$ are  by definition  the trace-free pieces of the appropriate blocks.
The corresponding pieces  ${W^{\pm a}}_{bcd}$ of the Riemann curvature tensor 
are in fact both conformally invariant, and 
 are  respectively called the
{\em self-dual} and {\em anti-self-dual Weyl curvature} tensors. 
The sum $W=W^++W^-$ is called the {\em Weyl tensor} or {\em conformal curvature} tensor, and vanishes if and only if the
metric $h$ is locally conformally flat. It should  be emphasized    that the distinction between the 
self-dual and anti-self-dual  parts of the 
Weyl tensor depends on a choice of  orientation; 
 reversing the orientation of $M$ interchanges $\Lambda^+$ and $\Lambda^-$, and so  interchanges
$W^+$ and $W^-$, too.

The present paper is a natural outgrowth of previous work on   the Einstein moduli spaces $\mathscr{E}(M)$ 
 for the   smooth compact oriented  $4$-manifolds $M$  that  arise  as {\sf del Pezzo surfaces}. 
 Recall that a {del Pezzo surface}  is defined to be  a 
 compact complex surface $(M^4,J)$  with ample    anti-canonical 
line bundle.   Up to diffeomorphism, there are
exactly ten such manifolds, namely  $S^2 \times S^2$ and the nine connected sums $\CP_2\# m\overline{\CP}_2$,  $m = 0, 1, \ldots, 8$. 
These are exactly \cite{chenlebweb}  the    smooth oriented compact  
$4$-manifolds that  admit both an Einstein metric with $\lambda > 0$ and an orientation-compatible symplectic structure.  
However, 
the currently-known Einstein metric on any of these spaces   are all conformally K\"ahler. 
Indeed, on most del Pezzos,   the currently-known Einstein metrics   \cite{sunspot,tian} are actually  K\"ahler-Einstein,
although  there are   two exceptional cases where  they are instead non-trivial conformal rescalings of special 
extremal K\"ahler metrics  \cite{chenlebweb,lebhem10}. Inspired in part by earlier work by Derdzi\'nski \cite{bes,derd}, and 
building upon his own results  in \cite{lebhem,lebuniq},
the author was eventually able  to characterize  \cite{lebcake} the known Einstein metrics on 
del Pezzo manifolds  by the property  that 
$W^+(\omega , \omega ) > 0$ everywhere, where $\omega$ is a non-trivial   (global) self-dual harmonic $2$-form. 
An interesting   corollary is that the known Einstein metrics on each  del Pezzo $4$-manifold $M$ 
exactly sweep out  one    connected component of  the corresponding 
Einstein moduli space $\mathscr{E}(M)$.

However, the  role of a global harmonic $2$-form $\omega$ in the above  criterion   makes it disquietingly   non-local. 
Fortunately,  Peng Wu \cite{pengwu}  has  recently discovered that  these known Einstein metrics can instead be characterized by
demanding    that 
$\det (W^+)$ be positive at every point, where the self-dual Weyl curvature is considered as an endomorphism 
$$W^+:\Lambda^+ \to \Lambda^+$$
of the rank-$3$ bundle of self-dual $2$-forms. 
 The present author  then found \cite{lebdet} an entirely different 
 proof  of this characterization  that  actually strengthens  the result, while 
 at the same time highlighting the previously-neglected  point that this criterion   only 
 forces our compact oriented Einstein manifold to be a del Pezzo if we explicitly  require it to  be
  {\em simply connected}. 
In this paper, we will  tackle  this last issue head-on,  
by describing  the moduli space
$$\mathscr{E}_{\det} (M) = \{ \mbox{Einstein metrics on $M$ with } \det (W^+) > 0 \}/(\Diff (M) \times \RR^+)$$
for each compact oriented $4$-manifold $M$ where this moduli space is non-empty. Our first main 
result  is the following: 

 \begin{main} 
\label{alpha} 
There are exactly 15 diffeotypes of compact oriented $4$-manifolds $M$ that carry 
Einstein metrics $h$ with $\det (W^+) > 0$  everywhere. 
For each such manifold, the moduli space $\mathscr{E}_{\det}(M)$ of these special Einstein metrics is 
connected, and   exactly sweeps out a single  connected component of the Einstein moduli space 
$\mathscr{E} (M)$.
\end{main}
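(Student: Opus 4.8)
The plan is to reduce the whole problem to the already-settled simply-connected case by passing to universal covers, and then to classify the finite free quotients that can arise.

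\emph{Step 1: reduction to a del Pezzo cover.} First I would note that the pointwise analysis of \cite{lebdet} forces any compact oriented Einstein metric with $\det(W^+)>0$ to have positive scalar curvature, and hence $\lambda>0$; Myers' theorem then makes $\pi_1(M)$ finite and the universal cover $(\tilde M,\tilde h)$ compact. Since the Einstein condition and the pointwise inequality $\det(W^+)>0$ are local and pull back to any cover, $(\tilde M,\tilde h)$ is a simply-connected compact oriented Einstein $4$-manifold with $\det(W^+)>0$, so by \cite{lebdet} it is one of the ten del Pezzo manifolds carrying its standard conformally K\"ahler Einstein metric. Thus $M=\tilde M/\Gamma$ for a finite group $\Gamma=\pi_1(M)$ acting freely on $\tilde M$ by orientation-preserving isometries.

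\emph{Step 2: $\Gamma$ is trivial or a free anti-holomorphic involution.} Because $\tilde h$ is conformally K\"ahler, $W^+$ has a simple eigenvalue whose eigenform is a multiple of the K\"ahler form $\omega$; any orientation-preserving isometry of $\tilde h$ preserves $\Lambda^+$ and commutes with $W^+$, hence fixes the line $\RR\omega$ and so carries $J$ to $\pm J$ --- it is either holomorphic or anti-holomorphic. Since a del Pezzo is rational, $h^{0,1}=h^{0,2}=0$, so the holomorphic Lefschetz number of any holomorphic automorphism equals $\chi(\mathcal{O})=1\neq 0$; by the Atiyah--Bott fixed point theorem every nontrivial holomorphic automorphism has a fixed point and so cannot lie in the freely-acting $\Gamma$. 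As the composite of two anti-holomorphic maps is holomorphic, the holomorphic elements of $\Gamma$ form an index-$\leq 2$ subgroup, which must be trivial; hence either $\Gamma=\{1\}$, so $M$ is a del Pezzo, or $\Gamma\cong\ZZ_2$ is generated by a single free, orientation-preserving, anti-holomorphic isometric involution $\sigma$.

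\emph{Step 3: enumeration.} A free involution requires $\chi(M)=\tfrac12\chi(\tilde M)\in\ZZ$, so $b_2(\tilde M)$ must be even; among the ten del Pezzos this leaves exactly $S^2\times S^2$ and $\CP_2\#m\overline{\CP}_2$ for $m=1,3,5,7$. For each I would produce a free anti-holomorphic involution realized isometrically by the Einstein metric --- for instance the product of the two antipodal maps on $S^2\times S^2$ --- and show it is unique up to conjugation in the isometry group, so that each contributes one new quotient diffeotype with $\pi_1=\ZZ_2$. These five quotients are pairwise non-diffeomorphic (distinguished by Euler characteristic and by the diffeotype of their universal covers) and distinct from the ten del Pezzos, giving exactly $15$ diffeotypes in all. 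I expect \emph{this step to be the main obstacle}: exhibiting the involution and proving uniqueness and freeness is routine in the homogeneous K\"ahler--Einstein cases $S^2\times S^2,\ \CP_2\#3\overline{\CP}_2,\ \CP_2\#5\overline{\CP}_2,\ \CP_2\#7\overline{\CP}_2$, but $\CP_2\#\overline{\CP}_2$ carries the non-K\"ahler Page metric, where one must work directly with its explicit cohomogeneity-one description to construct the involution and verify that it acts freely.

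\emph{Step 4: the moduli assertions.} Finally, pulling back by the covering map identifies $\mathscr{E}(M)$ with the $\Gamma$-invariant Einstein metrics on $\tilde M$ modulo equivariant diffeomorphism and rescaling, and $\mathscr{E}_{\det}(M)$ with the sublocus where $\det(W^+)>0$. That sublocus is open, so $\mathscr{E}_{\det}(M)$ is open in $\mathscr{E}(M)$. Since \cite{lebcake,lebdet} show $\mathscr{E}_{\det}(\tilde M)$ is connected and is a full component of $\mathscr{E}(\tilde M)$, any convergent family of Einstein metrics on $M$ lifts to a $\Gamma$-invariant convergent family on $\tilde M$ that cannot leave this single component, so $\det(W^+)$ remains positive in the limit; hence $\mathscr{E}_{\det}(M)$ is closed as well, i.e.\ a full component of $\mathscr{E}(M)$. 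Its connectedness then descends from that of $\mathscr{E}_{\det}(\tilde M)$ by averaging a connecting path over $\Gamma$.
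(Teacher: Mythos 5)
Your Steps 1 and 2 are essentially sound, and the Atiyah--Bott argument showing that the deck group contains no nontrivial holomorphic automorphism is a legitimate alternative to the paper's route (which gets $\pi_1\in\{0,\ZZ_2\}$ directly from the eigenline-bundle construction in Proposition \ref{rule}). The serious trouble begins in Step 3, where your enumeration is wrong in two ways that happen to cancel numerically. First, $\CP_2\#\overline{\CP}_2$ must be \emph{excluded}, not included: no orientable $4$-manifold with $\pi_1\neq 0$ is covered by it at all. This is the paper's Lemma \ref{halte} --- every smooth orientable $4$-manifold is spin$^c$, the relevant characteristic class on the quotient would be torsion, torsion pulls back to zero in the free group $H^2(\CP_2\#\overline{\CP}_2,\ZZ)$, and this would force the cover to be spin, contradicting its odd intersection form. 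So your plan to wrestle with the Page metric's cohomogeneity-one description is misdirected labor on a case that cannot occur; indeed, by \cite[Theorem A]{lebuniq} the covering Einstein metric must be K\"ahler--Einstein (Theorem \ref{katydid}), and $\CP_2\#\overline{\CP}_2$ carries no K\"ahler--Einstein metric in the first place. Second, $S^2\times S^2$ contributes \emph{two} quotient diffeotypes, not one: the involutions $\mathfrak{a}\times\mathfrak{r}$ and $\mathfrak{a}\times\mathfrak{a}$ are both free, and their quotients $\mathscr{P}$ and $\mathscr{Q}$ are not diffeomorphic since one is spin and the other is not. Your claimed ``uniqueness up to conjugation'' thus fails precisely in the case you called routine, and your total of $15$ is an accident of $(+1)+(-1)=0$. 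A further misconception: $\CP_2\#5\overline{\CP}_2$ and $\CP_2\#7\overline{\CP}_2$ are not homogeneous (they admit no nonzero holomorphic vector fields); their free anti-holomorphic involutions form positive-dimensional families, classified via real pencils of quadrics in $\CP_4$ and real quartic plane curves with empty real locus, respectively.

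Step 4 has a second genuine gap: connectedness of $\mathscr{E}_{\det}(M)$ does not descend from connectedness of $\mathscr{E}_{\det}(\widetilde{M})$ by ``averaging a connecting path over $\Gamma$.'' Averaging a path of metrics destroys the Einstein equation, and a path in $\mathscr{E}_{\det}(\widetilde{M})$ joining two $\Gamma$-invariant metrics need not pass through $\Gamma$-invariant ones; what is really needed is connectedness of the moduli of pairs (del Pezzo surface, free anti-holomorphic involution). The paper establishes this case by case: for degrees $8$ and $6$ the moduli space is a single point (Matsushima, resp.\ Siu's $\mathfrak{S}_3$-invariant metric, combined with Bando--Mabuchi uniqueness), while for degrees $4$ and $2$ it is the connectedness of the space of real quadric pencils, resp.\ of smooth real quartics without real points, a nontrivial input from real algebraic geometry \cite{ctcdp,quartics}. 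Finally, for the open-and-closed assertion, the paper sidesteps your lifting/limit argument by a pointwise characterization: the special metrics are cut out among all Einstein metrics both by the open condition $\det(W^+)>0$ and by the closed conditions $\det(W^+)=\frac{1}{3\sqrt{6}}|W^+|^3$ and $s\geq 0$ (Derdzi\'nski, Hitchin), which immediately makes $\mathscr{E}_{\det}(M)$ a union of connected components of $\mathscr{E}(M)$.
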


In order to state our second, more detailed  main result, we will  first need to  consider  two different $\ZZ_2$-actions on 
$S^2\times S^2$. Let $\mathfrak{a}: S^2 \to S^2$ denote the antipodal map, and let $\mathfrak{r}: S^2 \to S^2$
denote reflection across the equator, so that
\begin{equation}
\label{reflect} 
\mathfrak{a} = \left[\begin{array}{ccc}-1 &  &  \\ & -1 & \\ &  & -1\end{array}\right]
\qquad \mbox{and} \qquad
\mathfrak{r} =
 \left[\begin{array}{ccc}1 &  &  \\ & 1 & \\ &  & -1\end{array}\right]
\end{equation}
as elements of $\mathbf{O}(3)$. 
Then $\mathfrak{a} \times \mathfrak{r}$  and 
$\mathfrak{a} \times \mathfrak{a}$ are 
 both 
  free, orientation-preserving
involutions of $S^2 \times S^2$, and  the smooth compact $4$-manifolds 
\begin{eqnarray}
\mathscr{P}&:=& (S^2 \times S^2)/\langle \mathfrak{a} \times \mathfrak{r} \rangle  \nonumber \\
\mathscr{Q} &:=& (S^2 \times S^2)/\langle \mathfrak{a} \times \mathfrak{a} \rangle \label{mind}
\end{eqnarray}
are therefore both orientable. Note, however, these two manifolds are not  even homotopy equivalent   \cite[p. 101]{hakr}, 
because $\mathscr{P}$ is spin, whereas $\mathscr{Q}$ is not.

\begin{main} 
\label{beta} 
Let $M$ be a smooth compact oriented $4$-manifold that  is {\sf not simply connected}.
Then, in 
 the notation defined by \eqref{mind},  $M$ admits an an Einstein metric $h$ with     $\det (W^+) > 0$   if and only if
$M$ is diffeomorphic  to  $\mathscr{P}$ or    to 
$\mathscr{Q} \# k \overline{\CP}_2$ for some 
$k =0, 1, 2, 3$. Moreover, whenever such an Einstein metric $h$ exists, 
the universal cover  $(\widetilde{M}, \widetilde{h})$  of $(M,h)$ is necessarily isometric to a del Pezzo surface, equipped with 
a K\"ahler-Einstein metric, in such a manner that the non-trivial deck transformation becomes a free anti-holomorphic involution. 
\end{main}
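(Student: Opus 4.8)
The plan is to pass to the universal cover, invoke the simply connected classification, and then analyze the deck group as a group of holomorphic or anti-holomorphic isometries. First I would observe that both the Einstein condition and the pointwise condition $\det(W^+)>0$ are local and behave naturally under the covering projection, so the pullback $\widetilde h$ to $\widetilde M$ is again Einstein with $\det(W^+)>0$. Recall from \cite{lebdet,pengwu} that on a compact oriented Einstein $4$-manifold $\det(W^+)>0$ forces the Einstein constant to be positive, so $s>0$; Myers' theorem then makes $\pi_1(M)$ finite and $\widetilde M$ compact. Since $\widetilde M$ is now a simply connected compact oriented Einstein manifold with $\det(W^+)>0$, the simply connected case treated in \cite{lebdet} applies, and $(\widetilde M,\widetilde h)$ is a del Pezzo surface carrying one of the known, conformally K\"ahler, Einstein metrics. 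Write $J$, $\omega$, $g$ for the associated complex structure, conformal K\"ahler form, and K\"ahler metric.

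Next I would pin down the deck group $\Gamma=\pi_1(M)$, realized as a finite group of $\widetilde h$-isometries acting freely on $\widetilde M$. Because $\det(W^+)>0$, the top eigenvalue of $W^+:\Lambda^+\to\Lambda^+$ is everywhere simple, and by Derdzi\'nski's description \cite{derd} of $W^+$ for conformally K\"ahler metrics its eigenline is spanned by $\omega$. An orientation-preserving isometry preserves $W^+$ and hence this distinguished line, so it acts on $\Lambda^+$ as an element of $\mathbf{SO}(3)$ fixing $\RR\omega$, giving $\phi^*\omega=\pm\omega$; that is, every element of $\Gamma$ is either holomorphic or anti-holomorphic. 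Now a nontrivial finite-order holomorphic automorphism of a del Pezzo surface must have a fixed point: such a surface is rational with $h^{1,0}=h^{2,0}=0$, so its holomorphic Lefschetz number is $\tr(\phi^*|H^0(\mathcal O))=1\neq 0$, and the holomorphic Lefschetz fixed-point theorem then forbids a fixed-point-free action. Hence no nontrivial element of $\Gamma$ is holomorphic. Since the square of an anti-holomorphic map is holomorphic, every nontrivial element of $\Gamma$ is therefore an involution; and since the product of two distinct such involutions would be a nontrivial holomorphic element, $\Gamma$ has at most one nontrivial element. As $M$ is not simply connected, $\Gamma\cong\ZZ_2$, generated by a free anti-holomorphic involution $\sigma$.

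It remains to determine which del Pezzos $\widetilde M$ can occur. Since $\sigma$ is free and orientation-preserving, signature and Euler characteristic are multiplicative for the double cover, so $\tau(\widetilde M)=2\tau(M)$ and $\chi(\widetilde M)=2\chi(M)$ are both even. For $\widetilde M=\CP_2\#m\overline{\CP}_2$ this forces $m$ odd, ruling out $\CP_2$ itself and, in particular, the exceptional del Pezzo $\CP_2\#2\overline{\CP}_2$, and leaving only $S^2\times S^2$ and $m\in\{1,3,5,7\}$. To eliminate the remaining exceptional case $m=1$, I would use that $\CP_2\#\overline{\CP}_2$ carries a \emph{unique} $(-1)$-curve $E$, which $\sigma$ must preserve; contracting $E$ produces an anti-holomorphic involution of $\CP_2$ fixing the image point $p$. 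But every real structure on $\CP_2$ is standard, with fixed locus $\RP^2$, so $p$ is a real point; blowing up again at the real point $p$ forces an $\RP^1$ of fixed points on $E$, contradicting freeness. Thus $\widetilde M$ is $S^2\times S^2$ or $\CP_2\#m\overline{\CP}_2$ with $m\in\{3,5,7\}$ --- precisely the del Pezzos admitting K\"ahler--Einstein metrics --- so the known Einstein metric $\widetilde h$ is genuinely K\"ahler--Einstein, which is the asserted structure of the cover.

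Finally, for the identification of $M$ and for the converse existence statement, I would exhibit the involutions explicitly. On $S^2\times S^2=\CP_1\times\CP_1$ the maps $\mathfrak{a}\times\mathfrak{a}$ and $\mathfrak{a}\times\mathfrak{r}$ are free anti-holomorphic isometric involutions with quotients $\mathscr Q$ and $\mathscr P$; realizing $\CP_2\#(2k+1)\overline{\CP}_2$ as $\CP_1\times\CP_1$ blown up at $k$ free $(\mathfrak a\times\mathfrak a)$-orbits ($k=1,2,3$) lifts $\mathfrak a\times\mathfrak a$ to a free anti-holomorphic involution with quotient $\mathscr Q\#k\overline{\CP}_2$. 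Descending the K\"ahler--Einstein metric, made $\sigma$-invariant by uniqueness of K\"ahler--Einstein metrics, then yields in each case an Einstein metric on $M$ with $\det(W^+)>0$, proving the ``if'' direction. The main obstacle is the \emph{completeness} of the resulting list: one must show that free anti-holomorphic involutions on these del Pezzos produce no quotient diffeotypes beyond $\mathscr P$ and $\mathscr Q\#k\overline{\CP}_2$. This requires the smooth classification input of \cite{hakr} together with real del Pezzo theory --- in particular the facts that $S^2\times S^2$ contributes exactly the two inequivalent quotients $\mathscr P$ (spin) and $\mathscr Q$ (non-spin), and that $\mathscr P\#k\overline{\CP}_2\cong\mathscr Q\#k\overline{\CP}_2$ for $k\geq 1$, so that the $\mathscr P$-family collapses into the $\mathscr Q$-family after a single blow-up.
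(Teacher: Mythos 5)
Your proposal follows the same overall architecture as the paper: pass to the double cover, identify it as a del Pezzo surface carrying a K\"ahler--Einstein metric on which the deck transformation acts as a free anti-holomorphic involution, restrict the possible covers by multiplicativity of the signature, and then construct and descend the metrics explicitly. Two of your sub-arguments are correct, genuinely different alternatives to the paper's: you re-derive $\pi_1(M)=\ZZ_2$ from the holomorphic Lefschetz fixed-point theorem (the paper inherits this directly from the eigen-line-bundle construction of \cite{lebdet}, restated as Proposition~\ref{rule}), and you exclude the cover $\CP_2\#\overline{\CP}_2$ via the uniqueness of its $(-1)$-curve together with the standardness of real structures on $\CP_2$ (the paper quotes this classical fact from \cite{russo,ctcdp}, but deliberately substitutes the topological Lemma~\ref{halte}, which it reuses later for Theorem~\ref{alfine}).

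There is, however, a genuine gap at exactly the point where the paper has to work hardest. Your existence argument descends the K\"ahler--Einstein metric after it is ``made $\sigma$-invariant by uniqueness of K\"ahler--Einstein metrics.'' This is fine for the degree-$4$ and degree-$2$ covers, which carry no nonzero holomorphic vector fields, so that Bando--Mabuchi \cite{bandomab} gives honest uniqueness and hence $\sigma^*g=g$; and it is unnecessary for $S^2\times S^2$, where the metric is the visibly invariant round product. But it fails as stated for the degree-$6$ cover $\CP_2\#3\overline{\CP}_2$ (your $k=1$): there the identity component of the automorphism group is $(\CC^\times\times\CC^\times)/\ZZ_3$, Bando--Mabuchi gives uniqueness only \emph{modulo} this group, and all one can conclude is $\sigma^*g=\phi^*g$ for some automorphism $\phi$ --- which does not let $g$ descend to the quotient. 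Proposition~\ref{persius} of the paper exists precisely to close this hole: it takes Siu's $\mathfrak{S}_3$-invariant K\"ahler--Einstein metric \cite{s}, proves that $\mathfrak{S}_3$-invariance plus a normalization of the Einstein constant determines the metric uniquely (via a compactness argument for isometry groups), and only then verifies that the pull-back by the conjugated Cremona involution is again $\mathfrak{S}_3$-invariant, forcing invariance. Some argument of this kind (or, say, a Cartan fixed-point argument on the orbit of K\"ahler--Einstein metrics) is unavoidable for $k=1$. Separately, the ``completeness'' issue you flag at the end is deferred rather than solved, and the reference you lean on cannot carry it: \cite{hakr} is a homeomorphism classification, not a diffeomorphism one. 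The paper settles completeness case by case --- Matsushima's theorem plus an explicit isometry analysis for degree $8$ (Proposition~\ref{twofer}), Wall's uniqueness of the free anti-holomorphic involution for degree $6$, and connectedness of the moduli of real structures with empty real locus (pencils of real quadrics, respectively real quartics) for degrees $4$ and $2$ --- and your outline needs those specific inputs, not a generic appeal to real del Pezzo theory.
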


The proofs of these results are given in \S \ref{mainline} below, as the culmination  of  a series of 
 detailed  case-by-case studies   carried  out in earlier parts of \S \ref{double}. 
Then,  in \S \ref{last},  we  conclude the article by  generalizing these results in various ways, while 
also pointing  pointing out some associated  open problems.  
%
%

\section{Del Pezzos and Double Covers} 
\label{double} 

We begin by carefully refining the statement of  \cite[Proposition 2.3]{lebdet} in order to  
emphasize a key 
 technical  fact that lay buried in the proof. 

\begin{prop} 
\label{rule}
Let $(M,h)$ be a compact  oriented  Einstein $4$-manifold which satisfies 
$\det (W^+) > 0$ at every point. Then  either
\begin{enumerate}[{\rm (i)}]
\item $\pi_1(M)=0$, and $M$ admits an orientation-compatible  complex structure $J$ such that 
$(M,J)$  is a del Pezzo surface, and such that  the conformally rescaled  metric $g = |W^+|_h^{2/3}h$ is a  $J$-compatible  K\"ahler metric; or else, \label{mahi} 
\item $\pi_1(M)=\ZZ_2$, and $M$ is doubly covered by a del Pezzo surface   $(\widetilde{M},J)$  on which 
 the pull-back of  $g = |W^+|_h^{2/3}h$ is a  $J$-compatible  K\"ahler metric  $\widetilde{g}$, and where   the non-trivial deck transformation 
 $\sigma: \widetilde{M}\to \widetilde{M}$  is an anti-holomorphic involution of  $(\widetilde{M},J)$. \label{ahi} 
 \end{enumerate} 
\end{prop}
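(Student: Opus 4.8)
The plan is to leverage the cited result \cite[Proposition 2.3]{lebdet}, which already establishes that a compact oriented Einstein $4$-manifold with $\det(W^+)>0$ everywhere has finite fundamental group and is covered by a del Pezzo surface carrying the conformally rescaled K\"ahler metric. The substance of the present refinement is therefore twofold: first, to show that the only possibilities for $\pi_1(M)$ are the trivial group and $\ZZ_2$; and second, to extract from the covering structure the sharper conclusion that, in the non-simply-connected case, the deck transformation is an \emph{anti-holomorphic} involution. My first step would be to recall that the positivity of $\det(W^+)$ forces $W^+\neq 0$ everywhere, so the conformal factor $|W^+|_h^{2/3}$ is a smooth positive function and $g=|W^+|_h^{2/3}h$ is a genuine metric; by the quoted result, the universal cover $(\widetilde{M},\widetilde g)$ is a del Pezzo surface $(\widetilde{M},J)$ with $\widetilde g$ a $J$-compatible K\"ahler-Einstein metric, and in particular $\pi_1(M)$ is finite since $\widetilde{M}$ is compact.

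The second step is to pin down the deck group. The key observation is that $\widetilde g$, being K\"ahler on the del Pezzo $(\widetilde{M},J)$, is preserved by every deck transformation because $g$ descends to $M$; hence each deck transformation is an isometry of $(\widetilde{M},\widetilde g)$. Now comes the crucial dichotomy: the complex structure $J$ is canonically determined by the self-dual Weyl curvature of $g$ (it is the structure for which $\omega_g = |W^+|^{1/3}$-normalized self-dual eigenform associated to the dominant eigenvalue of $W^+$ is the K\"ahler form), and since $W^+$ and its eigenvalue structure are conformally natural, each deck transformation $\sigma$ must send $J$ either to $J$ or to $-J$, according to whether it preserves or reverses the orientation that $J$ induces. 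I would argue that a deck transformation preserving $J$ would be a holomorphic isometry; but the del Pezzo $\widetilde{M}$ is simply connected, so its universal cover is itself, and a free holomorphic involution would produce a complex-surface quotient with the same Euler characteristic halved---this is obstructed because a del Pezzo has no free holomorphic involution preserving both $J$ and the K\"ahler-Einstein metric (its holomorphic Euler characteristic $\chi(\mathcal O)=1$ is incompatible with such a quotient, and more directly the anticanonically-embedded del Pezzos admit no fixed-point-free holomorphic automorphism of finite order with the requisite properties). Consequently every nontrivial deck transformation must reverse $J$, i.e.\ be anti-holomorphic.

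The third step combines the orientation bookkeeping with the anti-holomorphic conclusion to force $\pi_1(M)=\ZZ_2$. Since $g$ is oriented by the complex structure $J$ and the deck transformations act by orientation-preserving isometries of $M$ (as $M$ is oriented), each anti-holomorphic deck transformation $\sigma$ reverses $J$ while preserving the orientation of $\widetilde{M}$; because reversing $J$ reverses the complex orientation, I need $\sigma$ to simultaneously act appropriately on the two factors $\Lambda^{\pm}$, and the composition $\sigma_1\circ\sigma_2$ of any two such anti-holomorphic maps is holomorphic and orientation-preserving, hence by the previous step must be the identity. This shows the deck group has at most two elements and that the nontrivial element, when it exists, is a free anti-holomorphic involution $\sigma$ of $(\widetilde{M},J)$ preserving $\widetilde g$, exactly as claimed in case \eqref{ahi}.

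I expect the main obstacle to be the rigidity argument ruling out free \emph{holomorphic} involutions of del Pezzo surfaces compatible with the K\"ahler-Einstein metric---i.e.\ showing that the composition of two deck transformations is trivial rather than merely holomorphic. A free holomorphic involution would exhibit the del Pezzo as a double cover of a smooth complex surface with half the topological Euler characteristic, and I would need to check case by case (using $e(\widetilde{M})=2+b_2$ ranging over the ten del Pezzo diffeotypes, together with the constraint that the quotient inherit the induced K\"ahler-Einstein structure and hence itself have positive first Chern class) that no such quotient exists; equivalently, that none of the ten del Pezzos admits a free holomorphic involution. Handling $S^2\times S^2$ and the blow-ups $\CP_2\#m\overline{\CP}_2$ uniformly, rather than by a tedious enumeration, will be the delicate point, and is presumably where the detailed structure theory of del Pezzo automorphism groups enters.
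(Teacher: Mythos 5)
Your proposal is correct in substance, but it reaches the crucial conclusion---anti-holomorphicity of the deck transformation---by a genuinely different route than the paper. The paper does not use \cite[Prop.\ 2.3]{lebdet} as a black box: it re-opens that proof, in which the double cover is constructed tautologically as the sphere bundle $\widetilde{M}=\{\omega\in L : |\omega|_g=\sqrt{2}\}$ of the top-eigenvalue line bundle $L\subset\Lambda^+$ of $W^+$. The non-trivial deck transformation is then, \emph{by construction}, the fiberwise antipodal map, so $\sigma^*\omega=-\omega$ and hence $\sigma^*J=-J$ immediately; the dichotomy $\pi_1=0$ versus $\ZZ_2$ comes from triviality versus non-triviality of $L$, combined with simple connectivity of del Pezzos. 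You instead argue abstractly about an arbitrary deck transformation of the universal cover: naturality of the eigenstructure of $W^+$ under orientation-preserving isometries forces $\sigma^*\omega=\pm\omega$, hence $\sigma^*J=\pm J$, and the holomorphic case is excluded because a del Pezzo admits no fixed-point-free holomorphic automorphism ($\chi(\mathcal{O})=1$ and the holomorphic Euler characteristic is multiplicative under free quotients; equivalently, the holomorphic Lefschetz number of any automorphism equals $1\neq 0$). This simultaneously bounds the deck group, since the holomorphic deck transformations form a subgroup of index at most $2$ that must be trivial. Your route buys independence from the particular construction of the cover, at the cost of the extra rigidity input; the paper's buys immediacy, since anti-holomorphicity is built into the covering construction. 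Note also that your closing worry is misplaced: no case-by-case enumeration over the ten del Pezzo types is needed, because your own $\chi(\mathcal{O}_{\widetilde{M}})=1$ argument is already uniform---a free holomorphic involution would force the quotient surface to have $\chi(\mathcal{O})=\tfrac12$. The ``delicate point'' you flag is in fact the easy point.

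Three corrections are needed before your argument matches the statement being proved. First, the pulled-back metric $\widetilde{g}$ is K\"ahler but \emph{not} K\"ahler--Einstein at this stage: $g=|W^+|_h^{2/3}h$ is conformal to the Einstein metric $h$ with a non-constant factor in general (compare the Page metric in case (i)); the K\"ahler--Einstein conclusion in case (ii) is Theorem \ref{katydid}, which requires \cite[Theorem A]{lebuniq} and Lemma \ref{halte}, and is not available here. You never actually use the Einstein property of $\widetilde{g}$, so this slip is harmless to your logic, but it must be stated correctly. Second, the Proposition asserts K\"ahlerness of the \emph{specific} metric $|W^+|_h^{2/3}h$, whereas \cite{lebdet} rescales by the top eigenvalue, $\alpha_h^{2/3}h$; your proof never reconciles the two. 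The paper does: once the rescaled metric is known to be K\"ahler, $W^+$ has eigenvalues $(\alpha_h,-\alpha_h/2,-\alpha_h/2)$, so $|W^+|_h^2=\tfrac32\alpha_h^2$ and the two metrics agree up to the constant factor $\sqrt[3]{3/2}$. Third, your parenthetical tying the sign in $\sigma^*J=\pm J$ to orientation is wrong: $J$ and $-J$ induce the \emph{same} orientation on a $4$-manifold (which is exactly why a free anti-holomorphic involution can have an orientable quotient), and every deck transformation here preserves orientation; the sign dichotomy comes from $\sigma^*\omega=\pm\omega$ on the preserved eigenline, not from any orientation consideration.
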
 
\begin{proof} The conformal rescaling of $h$ 
used in \cite{lebdet} was actually constructed as  $\alpha_h^{2/3}h$, where
$\alpha_h$ is the top eigenvalue of $W^+_h : \Lambda^+ \to \Lambda^+$. However,  once this rescaled metric has been shown to be
K\"ahler, it then follows that  $-\alpha_h/2$ is a repeated eigenvalue of  $W^+_h$, so that   one necessarily also has 
$|W^+|_h^2 = \frac{3}{2} \alpha_h^2$. Thus, the K\"ahler metric constructed in  \cite{lebdet}  simply coincides, 
up to a constant factor of $\sqrt[3]{3/2}$, with the metric  $g = |W^+|_h^{2/3}h$ considered above. 

The proof of  \cite[Proposition 2.3]{lebdet} actually  focuses on the real line-bundle $L\subset \Lambda^+$ given by the top eigenspace
of $W^+$; this is well-defined, because the  identity  $\tr (W^+) =0$ and the hypothesis $\det(W^+) > 0$  together 
imply that the top eigenvalue  of $W^+$ has multiplicity one everywhere. If $L$ is trivial,  one can then choose a global section
$\omega$ of $L$ such that $|\omega|_g\equiv \sqrt{2}$, and a Weitzenb\"ock argument (made possible by  the fact that any Einstein metric 
satisfies $\delta W^+=0$) is then used to show that $\omega$ is parallel. If, on the other hand, $L$ is non-trivial, 
$\widetilde{M}= \{ \omega\in L~|~ |\omega|_g =  \sqrt{2}\}$ defines a double cover of $M$ that comes
equipped with a tautological self-dual $2$-form $\omega$ that,    by the same Weitzenb\"ock argument as before, 
can then 
 be shown to be the K\"ahler form 
of the pulled-back metric $\widetilde{g}$ with respect to a suitable complex structure $J$. In the latter case, 
the non-trivial deck transformation $\sigma : \widetilde{M}\to \widetilde{M}$ preserves $\widetilde{g}$, and sends 
 $\omega$ to $-\omega$,  and so, because  $\omega = \widetilde{g} ( J \cdot , \cdot )$, must send 
$J$  to $-J$. Thus, in case \eqref{ahi}, 
$\sigma$ is  an anti-holomorphic involution of $(\widetilde{M}, J)$. 

Finally, the complex surface $(M,J)$ or $(\widetilde{M}, J)$ is automatically a del Pezzo. Indeed, since 
any K\"ahler surface  satisfies $\det (W_+) = s^3/864$, 
where $s$ is its scalar curvature, 
the assumption that $\det (W^+) > 0$ implies the scalar curvature 
of $g$ or $\widetilde{g}$ must be  positive  everywhere. 
Since the Einstein metric $h$   therefore has positive Einstein constant, and can now  be rewritten as $24 s^{-2} g$,
the transformation law for the Ricci curvature  under conformal changes implies
 \cite{lebhem} that the $(1,1)$-form $\rho + 2i \partial \bar{\partial} \log s$ is a positive
representative of $2\pi c_1$, where $\rho$ is the Ricci form of  our K\"ahler surface. 
The Kodaira embedding theorem thus implies  that the anti-canonical line-bundle $K^{-1}$  is 
 ample, and  $(M,J)$ or $(\widetilde{M}, J)$
is  therefore a del Pezzo surface, as claimed. 
\end{proof}

Because case \eqref{mahi} was  thoroughly analyzed in  previous papers \cite{lebdet,lebcake,sunspot},
we will only need to carefully discuss case   \eqref{ahi} in this article. Fortunately, this part  of the problem 
can largely be reduced to well-explored questions in real algebraic geometric. Indeed, 
since  $(\widetilde{M}, J)$  
can be embedded in a projective space $\mathbb{P} ([\Gamma (\mathcal{O}(K^{-\ell})]^*)$ on which 
$\sigma$ acts by complex conjugation,  $\widetilde{M}$ can be viewed as a complex projective algebraic variety 
defined over $\mathbb{R}$; and because the action of  $\sigma$  on $\widetilde{M}$ has no   fixed points, 
this variety automatically has empty real locus. The substantial classical and modern literature available concerning
real forms of del Pezzo surfaces \cite{quartics,kolreal,russo,ctcdp} has therefore   paved the road ahead of   us, 
and will make  it comparatively easy to completely solve the problem. 

Since traditional approaches to the subject emphasize  the 
 {\em degree} $c_1^2> 0$ 
of a del Pezzo surface, it will be important for us to relate  the degree of $\widetilde{M}$ to  the topology of
$M= \widetilde{M}/\langle \sigma \rangle$. For this purpose, it is useful to remember that any almost-complex
$4$-manifold satisfies $c_1^2= 2\chi + 3\tau$, 
where $\chi$ is the Euler characteristic and $\tau = b_+ - b_-$ is the signature. 
For the del Pezzo surface $\widetilde{M}$, however, the Todd genus $\mathbf{Td}= h^{0,0} - h^{0,1} + h^{0,2}= (\chi + \tau)/4$
must equal $1$, since $h^{0,1}=h^{0,2}=0$ by the Kodaira vanishing theorem. It therefore follows
that 
$$c_1^2 (\widetilde{M}) = 8 + \tau(\widetilde{M}) = 8 + 2\tau (M),$$
where in the last step we have recalled that  the 
signature $\tau$ is multiplicative under finite covers. 
On the other hand, $b_+(M)=0$, since the K\"ahler form $\omega$ spans the self-dual harmonic forms
on $(\widetilde{M},\widetilde{g})$, but is  $\sigma$-anti-invariant. 
Hence $\tau (M) = -b_-(M) = - b_2(M)$, and 
 $c_1^2 (\widetilde{M})= 2[4- b_2(M)]$.   As  a consequence, the only possibilities  are  $b_2(M)=0,1,2$ or $3$.  
We will now proceed by discussing each of these cases separately.

\subsection{The  $b_2(M)=0$ Case}

When $b_2(M)=0$, the double cover $\widetilde{M}$ must have  signature zero. Since this covering space 
is therefore a del Pezzo surface of degree $8$, classification \cite{delpezzo,dolgachev} tells us  that $\widetilde{M}$ is
 diffeomorphic to either  $S^2\times S^2$ or $\CP_2\# \overline{\CP}_2$. 
Now, it is a classical fact \cite{russo,ctcdp} that any anti-holomorphic involution of the  one-point blow-up of $\CP_2$ 
must  have a fixed point. But, as we will now observe,   this is actually preordained by  a more
general topological result. Although  elementary, the proof is worth recounting here in some detail, 
as doing so will  eventually save us needless extra work 
  in \S \ref{last}. 

\begin{lem} 
\label{halte}
No  smooth {\sf orientable}    $4$-manifold $M$ with $\pi_1\neq 0$ 
has a covering space   homeomorphic to $\CP_2 \# \overline{\CP}_2$. 
\end{lem}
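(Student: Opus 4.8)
The plan is to reduce the statement to a single sharp special case and then extract a contradiction from mod-$2$ characteristic classes. First I would use compactness: a covering of the closed manifold $M$ by the closed manifold $\widetilde{M}\cong\CP_2\#\overline{\CP}_2$ is automatically finite, of some degree $n$, and $\chi(\widetilde M)=n\,\chi(M)$ forces $n\mid 4$. Since $M$ is orientable with finite $\pi_1$, Poincar\'e duality gives $b_1(M)=b_3(M)=0$, whence $\chi(M)=2+b_2(M)\geq 2$; thus $n\leq 2$, and the only nontrivial possibility is $n=2$, with $\chi(M)=2$, $b_2(M)=0$, and $\pi_1(M)=\ZZ_2$. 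The nontrivial deck transformation $\sigma:\widetilde M\to\widetilde M$ is then a free involution, and because $p:\widetilde M\to M$ is a local diffeomorphism pulling back an orientation of $M$, it is orientation-preserving. So everything comes down to showing that $\CP_2\#\overline{\CP}_2$ admits no such free orientation-preserving involution with orientable quotient.

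The double cover is classified by a nonzero class $\alpha\in H^1(M;\ZZ_2)$, and I would next record what the mod-$2$ Gysin/transfer sequence of the cover tells us. Since $\widetilde M$ is simply connected, $H^1(\widetilde M;\ZZ_2)=0$, so the map $\cup\,\alpha:H^1(M;\ZZ_2)\to H^2(M;\ZZ_2)$ is injective, giving $\alpha^2\neq 0$, and moreover $\ker\bigl(p^*:H^2(M;\ZZ_2)\to H^2(\widetilde M;\ZZ_2)\bigr)=\langle\alpha^2\rangle$. The essential input is that $\CP_2\#\overline{\CP}_2$ has \emph{odd} intersection form, hence is not spin: $w_2(\widetilde M)\neq 0$. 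As $p$ is a covering, $T\widetilde M\cong p^*TM$, so $w_2(\widetilde M)=p^*w_2(M)$; therefore $w_2(M)\notin\ker p^*=\langle\alpha^2\rangle$. A short computation from $\pi_1(M)=\ZZ_2$ and Poincar\'e duality shows $\dim_{\ZZ_2}H^2(M;\ZZ_2)=2$, so $\{w_2(M),\alpha^2\}$ is a basis of $H^2(M;\ZZ_2)$.

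The crux is to evaluate $\alpha^4\in H^4(M;\ZZ_2)\cong\ZZ_2$ in two incompatible ways. On one hand, since $M$ is orientable we have $v_1=w_1=0$, so $\mathrm{Sq}^1$ vanishes on $H^3(M;\ZZ_2)$; as $\mathrm{Sq}^1$ is a derivation with $\mathrm{Sq}^1\alpha=\alpha^2$, this gives $\alpha^4=\mathrm{Sq}^1(\alpha^3)=0$. On the other hand, the Wu formula (with $v_2=w_2(M)$ for orientable $M$) gives, for the self-pairing in $H^4$, the identity $\langle\alpha^2,\alpha^2\rangle=\langle w_2(M),\alpha^2\rangle$; and from $w_4=w_2^2$ together with $\langle w_2^2,[M]\rangle\equiv\chi(M)\equiv 0\pmod 2$ we get $\langle w_2(M),w_2(M)\rangle=0$. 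Nondegeneracy of the cup-product pairing on the basis $\{w_2(M),\alpha^2\}$ then forces $\langle w_2(M),\alpha^2\rangle=1$ (otherwise $w_2(M)$ would lie in the radical), so $\alpha^4=\langle\alpha^2,\alpha^2\rangle=1\neq 0$. This contradicts $\alpha^4=0$, completing the argument.

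The genuine obstacle is the degree-$2$, $b_2(M)=0$ case: every \emph{numerical} invariant (Euler characteristic, signature, the Lefschetz and $G$-signature identities, even the full integral cohomology of a putative quotient) is perfectly consistent with such a free involution, so the contradiction can only be seen through the mod-$2$ characteristic-class computation above. It is worth emphasizing that non-spinness of $\widetilde M$ is exactly what drives the contradiction: for the spin manifold $S^2\times S^2$ the classes $w_2$ and $\alpha^2$ would no longer be independent and the argument correctly breaks down, in harmony with the free involutions $\mathfrak a\times\mathfrak r$ and $\mathfrak a\times\mathfrak a$ of \eqref{mind}. Thus the one-point blow-up $\CP_2\#\overline{\CP}_2$ is excluded on purely topological grounds, while $S^2\times S^2$ survives.
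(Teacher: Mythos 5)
Your proof is correct, and it takes a genuinely different route from the paper's. The two arguments share the opening reduction --- compactness, multiplicativity of $\chi$, and Poincar\'e duality force a degree-$2$ cover with $\chi(M)=2$, $b_2(M)=0$, $\pi_1(M)=\ZZ_2$ --- but then diverge completely. The paper stays with \emph{integral} coefficients and uses smoothness: every smooth orientable $4$-manifold is spin$^c$ (Hirzebruch--Hopf, Whitney), so $w_2(M)$ has an integral lift $a\in H^2(M,\ZZ)$; since $b_2(M)=0$ this class is torsion, while $H^2(\CP_2\#\overline{\CP}_2,\ZZ)\cong\ZZ\oplus\ZZ$ is torsion-free, so $\varpi^*a=0$ and hence $w_2$ of the cover vanishes, contradicting the odd intersection form. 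You instead work entirely in mod-$2$ cohomology: the Gysin sequence of the double cover gives $\alpha^2\neq 0$ and $\ker p^*=\langle\alpha^2\rangle$, non-spinness of the cover makes $\{w_2(M),\alpha^2\}$ a basis of the $2$-dimensional space $H^2(M;\ZZ_2)$, and the two Wu-formula evaluations of $\alpha^4$ collide. Each individual step checks out: $\mathrm{Sq}^1(\alpha^3)=\alpha^4$ by the derivation property, $\mathrm{Sq}^1|_{H^3}=\cup\,v_1=0$ by orientability, $w_4=w_2^2$ with $\langle w_2^2,[M]\rangle\equiv\chi(M)\equiv 0\pmod 2$, and $\langle w_2(M),\alpha^2\rangle=1$ by nondegeneracy of the mod-$2$ pairing. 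What each approach buys: the paper's proof is shorter and avoids Steenrod operations, but as written it leans on the spin$^c$ property of \emph{smooth} $4$-manifolds; your proof uses only mod-$2$ Poincar\'e duality and Wu's formulas, so it establishes the stronger statement that no closed orientable \emph{topological} $4$-manifold with $\pi_1\neq 0$ is covered by $\CP_2\#\overline{\CP}_2$, and it pinpoints exactly where non-spinness enters --- correctly predicting the breakdown for $S^2\times S^2$, where $w_2(M)$ lands in $\langle\alpha^2\rangle$ (trivially for $\mathscr{P}$, with $w_2=\alpha^2$ for $\mathscr{Q}$), consistent with \eqref{mind}. One cosmetic remark: the orientation-preserving property of the deck involution, which you record, is never actually used; what drives both Wu-formula steps is orientability of $M$ itself, i.e.\ $w_1(M)=v_1=0$.
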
 
\begin{proof} Let us  proceed by contradiction, and assume there exists  a covering map
$\varpi : N\to M$, where  $M$ is a smooth   oriented non-simply-connected \linebreak  $4$-manifold,
and where $N$ is homeomorphic (but perhaps not diffeomorphic) to $\CP_2 \# \overline{\CP}_2$. 
Notice  that  $M= \varpi (N)$ is automatically compact, and  that the simply connected manifold
$N$ is automatically its universal cover. We now give  
$N$ the orientation lifted from $M$, so that  the  degree $\geq 2$ of  $\varpi$ then equals  $|\pi_1(M)|$.
Since this in particular means that  $\pi_1(M)$ is  finite, 
$$H^1(M,\RR ) = \Hom (\pi_1 (M) , \RR ) =0,$$
and Poincar\'e duality for the oriented $4$-manifold $M$ therefore implies 
$$b_3 (M) =  b_1 (M) =0 \quad \mbox{and} \quad  b_4 (M) = b_0 (M) = 1, $$
where $b_j$ denotes the $j^{\rm th}$ Betti number with  $\RR$ coefficients. 
The Euler characteristic of $M$ is therefore given by 
$$\chi (M ) = \sum_{j=0}^4 (-1)^j b_j (M)  = 2+ b_2(M)\geq 2.$$
However, because the Euler characteristic $\chi$ is multiplicative under finite coverings,
we also  have
$$\chi (M ) = \chi (\CP_2 \# \overline{\CP}_2)/|\pi_1(M)|= 4 /|\pi_1(M)|\leq 2.$$
It therefore  follows  that $\chi (M)=2$, and that $b_2(M) =0$. 
In particular, $H^2 (M,\ZZ)$  has trivial  free part, and so  consists entirely 
of torsion elements. 

On the other hand, any smooth, orientable $4$-manifold is spin$^c$. 
Thus, there exists \cite{hiho,whitney} an integral cohomology class  $a\in H^2(M,\ZZ)$ 
satisfying 
$$\varrho (a) = w_2 (M) := w_2 (TM),$$
where 
$$\varrho: H^2 (M, \ZZ ) \to  H^2 (M, \ZZ_2)$$
denotes the natural homomorphism  induced by mod-$2$ reduction $\ZZ\to \ZZ_2$. 
However, since $\varpi$ is a smooth submersion,  $\varpi_* : TN  \cong  \varpi^*TM$. Thus,  the naturality
of Stiefel-Whitney classes with respect to pull-backs and  the 
 commutativity of
the diagram 
\begin{eqnarray*}
H^2(N, \ZZ ) &\stackrel{\varrho}{\to} & H^2 (N, \ZZ_2 )\\
   \uparrow_{\varpi^*} &&\quad \uparrow_{\varpi^*} \\
H^2(M, \ZZ ) &\stackrel{\varrho}{\to} & H^2 (M, \ZZ_2 )\
\end{eqnarray*}
together guarantee that 
\begin{eqnarray*}
\varrho \, ( \varpi^*(a) ) &=& \varpi^*( \varrho \,  (a)) =  \varpi^*(w_2 (TM) )\\
&=&  w_2 (\varpi^*TM ) = 
w_2 (TN)
\\&=&   w_2 (N) \in H^2(N, \ZZ_2).
\end{eqnarray*}
On the other hand, since $a\in H^2 (M, \ZZ)$  is a torsion element,  it follows that $\varpi^*a\in H^2 (N, \ZZ)$ is a torsion element, too. 
But  
$$H^2 (N, \ZZ)\cong H^2 ( \CP_2 \# \overline{\CP}_2 , \ZZ) = \ZZ \oplus \ZZ $$  is a free Abelian group,
so this implies  that 
 $\varpi^*a =0$. Hence  $$w_2 (N)= \varrho \, ( \varpi^*(a) )  =0.$$  But this is  absurd, because
$N\approx  \CP_2 \# \overline{\CP}_2$ has odd intersection form, and so is  not spin. 
It follows that the  oriented  $4$-manifold $M$    cannot exist, as claimed. 
\end{proof}

In our context, this simple fact has a striking consequence:

\begin{thm} Let $(M,h)$ be a compact  oriented  {\sf non-simply-connected} Einstein $4$-manifold that  satisfies 
$\det (W^+) > 0$ at every point.  Then $M$ is doubly covered by a del Pezzo surface   $(\widetilde{M},J)$
on which the pull-back $\widetilde{h}$ of $h$ is a $J$-compatible {\sf K\"ahler-Einstein} metric. 
\label{katydid}
\end{thm}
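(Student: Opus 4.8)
The plan is to run the hypothesis through Proposition \ref{rule} and then strengthen its conclusion. Since $M$ is assumed \emph{not} to be simply connected, case \eqref{mahi} of that proposition is impossible, so we must be in case \eqref{ahi}: the universal cover $\widetilde{M}$ is a del Pezzo surface $(\widetilde{M},J)$, the non-trivial deck transformation $\sigma$ is a free anti-holomorphic involution, and the pull-back $\widetilde{g}$ of $g=|W^+|_h^{2/3}h$ is a $J$-compatible K\"ahler metric. What Proposition \ref{rule} does \emph{not} yet supply is that the pull-back $\widetilde{h}$ of the Einstein metric $h$ is itself K\"ahler; a priori $\widetilde{h}$ is only \emph{conformal} to the K\"ahler metric $\widetilde{g}$, through the possibly non-constant factor $|W^+|_h^{2/3}$. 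The whole point of the theorem is thus to show that this conformal factor is constant, or equivalently that $\widetilde{g}$ is already Einstein.

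To achieve this I would study the conformal pair $(\widetilde{g},\widetilde{h})$ on the del Pezzo by means of Derdzi\'nski's theory \cite{derd} of K\"ahler metrics that are conformal to Einstein metrics. Because $\widetilde{h}$ is Einstein, being the pull-back of the Einstein metric $h$, and is conformal to the K\"ahler metric $\widetilde{g}$, Derdzi\'nski's dichotomy applies. Either $\widetilde{g}$ is itself K\"ahler--Einstein, in which case its scalar curvature is constant, the conformal factor $|W^+|_h^{2/3}$ is therefore constant, and $\widetilde{h}$ is a constant multiple of $\widetilde{g}$ and hence the desired $J$-compatible K\"ahler--Einstein metric; or else $\widetilde{g}$ is a \emph{non-Einstein} extremal K\"ahler metric with non-constant scalar curvature. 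In the second case I would appeal to the classification of conformally-K\"ahler, non-K\"ahler Einstein metrics on del Pezzo surfaces \cite{chenlebweb,lebcake}, which forces $(\widetilde{M},J)$ to be biholomorphic to exactly one of two complex surfaces: the Page example on $\CP_2\#\overline{\CP}_2$, or the Chen--LeBrun--Weber example on $\CP_2\#2\overline{\CP}_2$.

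It then remains only to rule out both exceptional possibilities, and here I would reuse the topological bookkeeping already assembled above. The Chen--LeBrun--Weber surface $\CP_2\#2\overline{\CP}_2$ has degree $c_1^2=7$, which is odd, whereas the covering relation forces $c_1^2(\widetilde{M})=2[4-b_2(M)]$ to be even; equivalently, $\chi(\CP_2\#2\overline{\CP}_2)=5$ is odd, so this surface admits no free $\ZZ_2$-action whatsoever. The Page surface $\CP_2\#\overline{\CP}_2$ passes this parity test, but it is excluded instead by Lemma \ref{halte}, which forbids $\CP_2\#\overline{\CP}_2$ from being a covering space of any non-simply-connected orientable $4$-manifold. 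With both exceptional del Pezzos eliminated, the first horn of Derdzi\'nski's dichotomy must hold, so $\widetilde{g}$ is K\"ahler--Einstein and $\widetilde{h}$ is the promised K\"ahler--Einstein metric. I expect the genuine obstacle to be the middle step---the reduction of the non-Einstein case to precisely the Page and Chen--LeBrun--Weber surfaces---since this is where the real analytic input (Derdzi\'nski's structure theorem together with the del Pezzo classification) is concentrated; once that reduction is secured, the two exclusions follow at once from the parity computation and from Lemma \ref{halte}.
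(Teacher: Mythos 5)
Your proposal is correct and takes essentially the same route as the paper: the paper also reduces to case \eqref{ahi} of Proposition \ref{rule}, then invokes the classification of compact Einstein $4$-manifolds that are conformally K\"ahler but not K\"ahler--Einstein---citing \cite[Theorem A]{lebuniq}, which packages your two middle steps (Derdzi\'nski's dichotomy plus the identification of the exceptional surfaces) into the single statement that $\CP_2\#\overline{\CP}_2$ and $\CP_2\#2\overline{\CP}_2$ are the only such manifolds---and finally excludes these two exactly as you do, the first via Lemma \ref{halte} and the second by parity (odd signature in the paper, odd $\chi$ and odd $c_1^2$ in your version; these are equivalent). The only adjustment needed is attribution: the classification you credit to \cite{chenlebweb,lebcake} is really \cite[Theorem A]{lebuniq}.
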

\begin{proof}
Case \eqref{ahi} of Proposition \ref{rule} tells us  that the Einstein manifold  $(\widetilde{M},\widetilde{h})$ is conformally
K\"ahler. However, by \cite[Theorem A]{lebuniq}, $\CP_2\# \overline{\CP}_2$ and $\CP_2\# 2\overline{\CP}_2$ are the only 
 two compact $4$-manifolds that carry 
Einstein metrics that are   conformally K\"ahler,  but not K\"ahler-Einstein.
But  neither of these is  the double
cover of an  oriented $4$-manifold;  the second is prohibited because
 its signature is odd, while 
 the first is ruled out by Lemma \ref{halte}. \end{proof}

As an immediate consequence, 
any compact oriented Einstein $4$-manifold $(M, h)$ with 
$\det (W^+) > 0$ and $b_2=0$ must be   doubly covered by $\CP_1\times \CP_1$, equipped with a K\"ahler-Einstein metric. 
However,  a theorem of 
 Matsushima  \cite[Th\'eor\`eme 1]{mats} implies that  any K\"ahler-Einstein metric on $\CP_1\times \CP_1$
must be invariant under a maximal compact subgroup
 $\cong \mathbf{SO}(3)\times \mathbf{SO}(3)$ of the identity component $\mathbf{PSL}(2,\CC ) \times \mathbf{PSL}(2,\CC )$
 of the complex automorphism group. 
Thus,  the universal cover $(\widetilde{M}, \widetilde{h})$ of $(M,h)$ must be homothetic  to 
the homogeneous Einstein manifold $(S^2 , g_0) \times (S^2 , g_0)$, where $g_0$
is the ``round'' unit-sphere metric on $S^2 = \CP_1$. This   allows us  to  deduce  the following:

\begin{prop} Modulo constant rescalings, any compact oriented Einstein $4$-manifold $(M, h)$ with 
$\det (W^+) > 0$ and $b_2=0$ is isometric to exactly one of the Riemannian  quotients described by 
\eqref{mind}. Since the  two $4$-manifolds $\mathscr{P}$ and $\mathscr{Q}$ are not
diffeomorphic, it thus follows that the  moduli spaces $\mathscr{E}_{\det} (\mathscr{P})$ and $\mathscr{E}_{\det} (\mathscr{Q})$
each   consist of a single point. \label{twofer}
\end{prop}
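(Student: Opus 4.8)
The plan is to leverage everything the preceding discussion has already extracted, and thereby reduce the entire statement to a classification of a single isometric involution. As established just before the statement, any such $(M,h)$ has universal cover $(\widetilde{M},\widetilde{h})$ homothetic to the round product $(S^2,g_0)\times(S^2,g_0)$, with $\pi_1(M)=\ZZ_2$; and by case \eqref{ahi} of Proposition \ref{rule}, the nontrivial deck transformation $\sigma$ is a free, anti-holomorphic involution. Since anti-holomorphic maps between complex surfaces preserve orientation, $\sigma$ is in addition orientation-preserving. Thus the whole problem comes down to enumerating, up to conjugacy, the free orientation-preserving isometric involutions of the round $S^2\times S^2$.

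First I would write down the isometry group of $(S^2,g_0)\times(S^2,g_0)$ as $(\mathbf{O}(3)\times\mathbf{O}(3))\rtimes\ZZ_2$, the last factor interchanging the two spheres. Any isometry lying in the nontrivial component of this semidirect product has the form $(x,y)\mapsto(Ay,Bx)$, and an involution of this type forces $B=A^{-1}$ and fixes the entire diagonal $\{(Ay,y):y\in S^2\}$; such isometries can therefore never be free, so $\sigma$ must lie in the identity component and hence take the product form $(x,y)\mapsto(Ax,By)$ with $A^2=B^2=I$. Now I would pin down $A$ and $B$ using the two remaining constraints. Freeness requires $\mathrm{Fix}(A)\times\mathrm{Fix}(B)=\varnothing$, that is, at least one of $A,B$ acts on $S^2$ without fixed points; but the only involution in $\mathbf{O}(3)$ with empty fixed-point set on $S^2$ is the antipodal map $\mathfrak{a}=-I$, so at least one factor equals $\mathfrak{a}$. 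Orientation-preservation of the product means $\det A=\det B$ (as elements of $\mathbf{O}(3)$), and since $\det\mathfrak{a}=-1$ this forces both $A$ and $B$ to be orientation-reversing on $S^2$. The only involutions of $\mathbf{O}(3)$ with determinant $-1$ are $\mathfrak{a}$ itself and the reflections, each of which is conjugate to $\mathfrak{r}$. Enumerating the possibilities $(\mathfrak{a},\mathfrak{a})$, $(\mathfrak{a},\mathfrak{r})$, $(\mathfrak{r},\mathfrak{a})$, and $(\mathfrak{r},\mathfrak{r})$, with the last excluded by freeness and the middle two identified by the sphere-interchanging isometry, and simultaneously conjugating $A$ and $B$ into standard position by an isometry of the product, I would conclude that, up to isometry, $\sigma$ is either $\mathfrak{a}\times\mathfrak{a}$ or $\mathfrak{a}\times\mathfrak{r}$. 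The corresponding quotients are exactly $\mathscr{Q}$ and $\mathscr{P}$ of \eqref{mind}.

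Finally I would assemble the moduli-space conclusion. Because $\mathscr{P}$ is spin and $\mathscr{Q}$ is not, the two quotients are not diffeomorphic, so each $(M,h)$ is isometric, after rescaling, to exactly one of them. For the count, I would note that the round product is the \emph{unique} K\"ahler-Einstein metric on $S^2\times S^2$ up to scale: by Matsushima's theorem a maximal compact $\mathbf{SO}(3)\times\mathbf{SO}(3)$ forces a product of round spheres, and the Einstein condition equalizes the two radii. Since the deck involution is likewise unique up to conjugation by an isometry of that product, any two metrics in $\mathscr{E}_{\det}(\mathscr{P})$ lift to the same round product with isometrically conjugate deck transformations, so a conjugating isometry of $S^2\times S^2$ descends to an isometry (up to scale) between them; the same argument applies to $\mathscr{Q}$. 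Hence each of $\mathscr{E}_{\det}(\mathscr{P})$ and $\mathscr{E}_{\det}(\mathscr{Q})$ is a single point. The one step demanding genuine care is the middle one: confirming that freeness and orientation-preservation, together with the removal of the sphere-interchanging component, really do leave \emph{only} the two listed conjugacy classes, and in particular that no subtler free involution hides among the determinant-$(-1)$ pairs.
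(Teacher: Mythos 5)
Your proposal is correct, and it reaches the same enumeration $\{\mathfrak{a}\times\mathfrak{r},\ \mathfrak{a}\times\mathfrak{a}\}$ as the paper, but the key middle step is handled by a genuinely different mechanism. The paper first shows that every isometry of the round product respects the pair of sphere-foliations via a sectional-curvature argument ($K(\Pi)=1$ iff $\Pi$ is tangent to a factor), and then rules out factor-swapping behavior of $\sigma$ topologically: since $\sigma$ is free and orientation-preserving, the Lefschetz fixed-point theorem forces $L(\sigma)=0$, hence $\sigma_*=-I$ on $H_2(S^2\times S^2;\RR)$, which simultaneously shows that $\sigma$ preserves each family of spheres and that its restriction to each factor is orientation-reversing; the product structure of $\sigma$ then comes from the Riemannian-submersion property of the projections. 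You instead take the group-theoretic structure $\Iso(S^2\times S^2)\cong(\mathbf{O}(3)\times\mathbf{O}(3))\rtimes\ZZ_2$ as your starting point, kill the swap component by the direct computation that any involution $(x,y)\mapsto(Ay,A^{-1}x)$ fixes the twisted diagonal $\{(Ay,y)\}$ pointwise, and recover orientation-reversal on each factor by determinant bookkeeping. Your route is more elementary --- it needs no algebraic topology, and the swap-exclusion is a two-line computation --- whereas the paper's route both proves the isometry-group structure from scratch (your semidirect-product decomposition is exactly what its curvature argument establishes, so you are implicitly invoking de Rham decomposition or an equivalent) and yields the extra structural fact $\sigma_*=-I$ on $H_2$, which dovetails with the earlier observation that $b_+(M)=0$. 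One further small divergence: for the non-diffeomorphism of $\mathscr{P}$ and $\mathscr{Q}$ you quote the spin/non-spin distinction (as the introduction does, citing Hambleton--Kreck), while the paper's proof makes this self-contained by exhibiting an $\RP^2\subset\mathscr{Q}$ of odd mod-$2$ self-intersection and checking that the generators of $H_2(\mathscr{P},\ZZ_2)$ have even self-intersection; if you want your argument to stand alone, you should include such a computation rather than assert the spin dichotomy.
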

\begin{proof}
With respect to the product metric $g_0\oplus g_0$, the sectional curvature $K(\Pi)$ of a $2$-plane $\Pi\subset T(S^2 \times S^2)$ 
belongs to $[0,1]$, and satisfies $K(\Pi)=1$ iff $\Pi$ is tangent to an $S^2$ factor. Thus, any isometry of $(S^2 \times S^2, g_0\oplus g_0)$
must send each $2$-sphere $S^2 \times \{ pt\}$ or $\{ pt\} \times S^2$ to a $2$-sphere of one of these two types. On the other hand, 
because the orientation-preserving 
 isometric involution $\sigma : S^2 \times S^2 \to S^2 \times S^2$ must  not have   fixed points, the Lefschetz fixed-point theorem 
 tells us that  its Leftschetz
number must vanish. That is, 
$$0 = L(\sigma ) = \sum_j (-1)^j \tr \left(\sigma_*|_{H_j (S^2 \times S^2)}\right) = 2 + \tr \left(\sigma_*|_{H_2 (S^2 \times S^2)}\right),$$
where $\sigma_*$ is the induced map on homology with  $\RR$ coefficients. Since $(\sigma_*)^2=I$ and $\tr \left(\sigma_*|_{H_2 (S^2 \times S^2)}\right)=-2$, it follows that $\sigma_*=-I$ on $H_2(S^2 \times S^2, \RR )$. Hence each sphere $S^2 \times \{ pt\}$ must be 
 sent  isometrically by $\sigma$ 
to a sphere of the same kind, in an orientation-reversing manner; and the same conclusion similarly applies to spheres of the form 
$\{ pt\} \times S^2$. Since the projection of $S^2 \times S^2$ to either factor is a Riemannian submersion,  it therefore follows
that $\sigma$ must be the product of two isometric, orientation-reversing involutions of $(S^2, g_0)$. However, 
any such involution is diagonalizable, with eigenvalues $\pm 1$. Up to conjugation, the only candidates for these  maps of $S^2$ 
 are therefore 
the involutions  $\mathfrak{a}$ and  $\mathfrak{r}$ described by \eqref{reflect}. However,
$\mathfrak{r}\times \mathfrak{r}$ can be excluded as a candidate for $\sigma$, since it has fixed points. 
 Thus, after interchanging factors if necessary, the only remaining possibilities for $\sigma$ are the 
free anti-holomorphic involutions $\mathfrak{a}\times \mathfrak{r}$ and $\mathfrak{a}\times \mathfrak{a}$ of
$S^2 \times S^2 = \CP_1 \times \CP_1$.

It  therefore only remains to show that $\mathscr{P}:= (S^2\times S^2)/\langle \mathfrak{a}\times \mathfrak{r}\rangle$
is   not diffeomorphic to 
$\mathscr{Q} := (S^2\times S^2)/\langle \mathfrak{a}\times \mathfrak{a}\rangle$. To see this, first notice that 
 $w_2 (\mathscr{Q})\neq 0$, since  the diagonal $S^2 \subset S^2 \times S^2$ projects to   an $\RP^2 \subset \mathscr{Q}$ 
that has   normal bundle $\cong T\RP^2$, and so has self-intersection $\chi (\RP^2) =1$  mod  $2$. By contrast, 
$H_2 (\mathscr{P}, \ZZ_2)$ is generated by the $\RP^2$-image of $S^2 \times \{ (1,0,0)\}$ and the $S^2$-image of $\{ pt\} \times S^2$;
and since each  of these submanifolds has  small perturbations that do not intersect it, both have self-intersection zero, and 
 it follows that  $w_2 (\mathscr{P})=0$. Thus,  the  $4$-manifolds  $\mathscr{P}$ and $\mathscr{Q}$
 certainly aren't  diffeomorphic, 
 because  one is spin, while  the other isn't. 
 \end{proof}

%
%

\subsection{The $b_2(M)=1$ Case}

When $b_2(M)=1$, the  del Pezzo surface
  $(\widetilde{M},J)$ has  degree $c_1^2= 6$. Because  this complex surface has  $K^{-1}$ ample, surface classification  easily allows
  one to show \cite{delpezzo,dolgachev} that it must exactly be the blow-up of $\CP_2$ at three non-collinear points, 
  which we may take to be $[1,0,0]$, $[0,1,0]$,   and $[0,0,1]$. By adjusting our coordinates if necessary, the free anti-holomorphic 
  involution $\sigma : \widetilde{M}\to \widetilde{M}$ can moreover then be identified \cite[p. 60]{ctcdp} with the  map 
  $$
  \Upsilon : \CP_2\# 3 \overline{\CP}_2 \to  \CP_2\# 3 \overline{\CP}_2
  $$
  given   by the 
  conjugated Cremona transformation
 $$
      [ z_1 : z_2 : z_3 ] \longmapsto  [ \frac{1}{\bar{z}_1} : -\frac{1}{\bar{z}_2} : \frac{1}{\bar{z}_3}].
$$

This last
 uniqueness assertion might come as something of a surprise.  For instance, if we blow up $\CP_1 \times \CP_1$ at a generic pair of distinct points  that are  interchanged by $\mathfrak{a}\times \mathfrak{r}$,
the anti-holomorphic involution thereby induced on the blow-up is actually isomorphic to the one we would have produced had we  instead started with 
$\mathfrak{a}\times \mathfrak{a}$; for although identifying the two-point blow-up of $\CP_1 \times \CP_1$  with
the three-point blow up $\CP_2$ in the standard way produces  two  anti-holomorphic involutions that  look superficially different, these
actually turn out to simply  differ by a  Cremona transformation \cite{russo}. 
In particular, it follows  that the non-spin $4$-manifolds $\mathscr{P}\# \overline{\CP}_2$ and $\mathscr{Q}\# \overline{\CP}_2$ are both 
diffeomorphic to $(\CP_2\# 3 \overline{\CP}_2) /\langle \Upsilon \rangle$. 

Our discussion thus far has revealed that 
 any compact  oriented  Einstein manifold $(M^4, h)$ 
with $\pi_1\neq 0$, $b_2=1$,  and $\det (W^+)>0$  must  be   diffeomorphic to $\mathscr{Q}\# \overline{\CP}_2$. 
We will  now show that, conversely,   this possibility actually arises, and   that it  does so moreover in an essentially  unique way:

\begin{prop} There is an Einstein metric $h$ on $\mathscr{Q}\# \overline{\CP}_2$  
that satisfies $\det (W^+)> 0$ at every point. Moreover, any 
 compact oriented  Einstein manifold $(M^4,h^\prime)$ with  $\pi_1\neq 0$,  $b_2=1$, and $\det (W^+)> 0$ is 
isometric  to $(\mathscr{Q}\# \overline{\CP}_2, \mathsf{a} h)$ for some positive constant $\mathsf{a}$. 
As a consequence,   the restricted Einstein 
moduli space $\mathscr{E}_{\det}(\mathscr{Q}\# \overline{\CP}_2)$  therefore consists of exactly one  point.
\label{persius}
\end{prop}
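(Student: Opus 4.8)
The plan is to reduce both halves of the statement to the uniqueness theory of the K\"ahler-Einstein metric on the degree-$6$ del Pezzo surface $\widetilde{M}=\CP_2\#3\overline{\CP}_2$ lying above $M$, taking advantage of the fact recorded in Theorem~\ref{katydid} that here the pulled-back Einstein metric is genuinely K\"ahler-Einstein rather than merely conformally K\"ahler. For the existence assertion, I would first invoke the classical fact that this toric Fano surface carries a K\"ahler-Einstein metric $\widetilde{h}$: its automorphism group is reductive, so no Matsushima obstruction arises, and by Bando--Mabuchi $\widetilde{h}$ is unique up to scale and the action of $\Aut(\widetilde{M},J)$, hence unique up to isometry and scale as a Riemannian metric. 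Because $\Upsilon$ is anti-holomorphic, and because a metric that is K\"ahler for $J$ is automatically K\"ahler for $-J$, the pullback $\Upsilon^{*}\widetilde{h}$ is again K\"ahler-Einstein; uniqueness then lets me make $\widetilde{h}$ itself $\Upsilon$-invariant, by applying Cartan's fixed-point theorem to the isometric involution that $\Upsilon$ induces on the nonpositively curved space $\Aut_{0}/\mathrm{Isom}_{0}$ of normalized K\"ahler-Einstein metrics. This invariant metric descends, across the orientation-preserving local isometry $\widetilde{M}\to\widetilde{M}/\langle\Upsilon\rangle=\mathscr{Q}\#\overline{\CP}_2$, to an Einstein metric $h$; and since $\det(W^{+})=s^{3}/864>0$ for any K\"ahler metric, the quotient inherits $\det(W^{+})>0$ everywhere.

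For the uniqueness assertion, let $(M^{4},h')$ be any metric satisfying the hypotheses. The discussion preceding the proposition already shows $M\cong\mathscr{Q}\#\overline{\CP}_2$, with double cover the same degree-$6$ del Pezzo, now carrying a pulled-back K\"ahler-Einstein metric $\widetilde{h}'$ and a free anti-holomorphic deck involution $\sigma'$. Bando--Mabuchi supplies a biholomorphism identifying $\widetilde{h}'$ with $\mathsf{a}\widetilde{h}$ for some $\mathsf{a}>0$; after applying it, the task reduces to showing that the two deck involutions $\sigma'$ and $\Upsilon$---now both isometries of $\widetilde{h}$---are conjugate through an isometry of $(\widetilde{M},\widetilde{h})$, since such a conjugation descends to the sought isometry $(M,h')\cong(M,\mathsf{a}h)$. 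That $\sigma'$ and $\Upsilon$ are conjugate under \emph{some} holomorphic automorphism is precisely the uniqueness, up to conjugacy, of the free anti-holomorphic involution (equivalently, of the real form with empty real locus) already quoted from \cite{russo,ctcdp}. It then remains to promote this conjugating automorphism to an isometry, which I would again handle by the Bando--Mabuchi-plus-fixed-point mechanism of the existence step.

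The final claim is then immediate: every admissible Einstein metric on $\mathscr{Q}\#\overline{\CP}_2$ is isometric to a constant rescaling of $h$, so all represent the same class after quotienting by $\Diff\times\RR^{+}$, and $\mathscr{E}_{\det}(\mathscr{Q}\#\overline{\CP}_2)$ is a single point. The step I expect to be the main obstacle is the last one in the uniqueness argument---upgrading conjugacy of the anti-holomorphic involutions under the full holomorphic automorphism group to conjugacy under the smaller isometry group of the K\"ahler-Einstein metric. Controlling this requires tracking how the automorphism that intertwines the two real structures interacts with the maximal compact subgroup realized by $\mathrm{Isom}(\widetilde{M},\widetilde{h})$, and is the point at which the explicit toric symmetry of the degree-$6$ del Pezzo is genuinely needed.
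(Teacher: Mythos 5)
Your route is genuinely different from the paper's, and its existence half is sound: the paper does not symmetrize abstractly, but instead takes Siu's explicitly $\mathfrak{S}_3$-invariant K\"ahler--Einstein metric $g$, proves that $g$ is the \emph{unique} $\mathfrak{S}_3$-invariant normalized K\"ahler--Einstein metric (Bando--Mabuchi plus a divergence/compactness trick), and then uses the commutation relations $\Upsilon\circ\alpha_j=\beta_j\circ\alpha_j\circ\Upsilon$, with $\beta_j$ in the isometric torus, to conclude that $\Upsilon^*g$ is again $\mathfrak{S}_3$-invariant and normalized, hence equal to $g$. Your Cartan fixed-point argument on the Hadamard space $X=\Aut_0\cdot\widetilde{h}\cong \Aut_0/(\Aut_0\cap \mathrm{Isom}(\widetilde{h}))$ of normalized K\"ahler--Einstein metrics is a legitimate, more conceptual replacement for this, needing only the bare existence of some K\"ahler--Einstein metric. (One caveat: reductivity of the automorphism group is merely the \emph{necessary} Matsushima condition, not a reason existence holds; existence itself must be quoted from Siu or Tian--Yau, as your phrase ``classical fact'' effectively does.)

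The genuine gap is exactly where you predicted it, and re-invoking the ``fixed-point mechanism'' does not close it: Cartan's theorem produces \emph{some} $\Upsilon$-invariant normalized K\"ahler--Einstein metric, but the promotion of Wall's biholomorphic conjugacy $\sigma'=F\Upsilon F^{-1}$ to an isometric conjugacy requires a \emph{uniqueness} statement -- namely, that the fixed locus of the $\Upsilon$-action on $X$ is a single point (an equivariant refinement of Bando--Mabuchi). Without it, nothing yet rules out that $\sigma'$ and $\Upsilon$, though both isometric involutions of $\widetilde{h}$ and conjugate in $\Aut(\widetilde{M})$, have non-isometric quotients. Fortunately the missing step is the short toric computation you anticipated: for $\Psi=[\mathrm{diag}(t_1,t_2,t_3)]$ in $\Aut_0\cong(\CC^\times\times\CC^\times)/\ZZ_3$, the explicit formula for $\Upsilon$ gives $\Upsilon\Psi\Upsilon^{-1}=\overline{\Psi}^{\,-1}$, so conjugation by $\Upsilon$ is trivial on the unitary torus $\mathbb{T}^2/\ZZ_3$ and acts as $-\mathrm{id}$ on the flat factor; the induced affine involution of $X\cong\RR^2$ therefore has linear part $-\mathrm{id}$ and exactly one fixed point. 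This yields existence \emph{and} uniqueness of the invariant normalized metric at one stroke, and the promotion step then closes cleanly: since $\sigma'$ and $\Upsilon$ both preserve $\widetilde{h}$, one computes $\Upsilon^*(F^*\widetilde{h})=(F\Upsilon)^*\widetilde{h}=(\sigma'F)^*\widetilde{h}=F^*\widetilde{h}$, so $F^*\widetilde{h}$ is $\Upsilon$-invariant and normalized, hence $F^*\widetilde{h}=\widetilde{h}$; thus $F$ is itself an isometry, conjugates $\sigma'$ to $\Upsilon$, and descends to the desired isometry of quotients. (This is also, in effect, the content hidden in the paper's terse uniqueness sentence; alternatively, the paper's divergence trick adapts verbatim: if $\Psi^*\widetilde{h}\neq\widetilde{h}$ were $\Upsilon$-invariant, then $\widetilde{h}$ would be invariant under $(\Psi\overline{\Psi})^{-1}$, whose powers diverge in $\Aut_0$, contradicting compactness of the isometry group.)
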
 
\begin{proof}
Siu \cite[p. 621]{s} proved that $\CP_2\# 3 \overline{\CP}_2$ admits a $J$-compatible 
K\"ahler-Einstein metric $g$ with Einstein constant $1$ that is 
  invariant under the compact group of automorphisms generated  by the permutations
$$
 \alpha_1 = \left[\begin{array}{ccc} 1&  &  \\ &  &  1\\  &  1& \end{array}\right], 
\quad 
\alpha_2 = \left[\begin{array}{ccc} &  & 1 \\ &  1&  \\ 1 &  & \end{array}\right],
\quad 
\alpha_3 = \left[\begin{array}{ccc} & 1 &  \\1 &  &  \\ &  & 1\end{array}\right] , $$
along with the action of the $2$-torus 
$$
\mathbb{T}^2:=\mathbf{S}( \mathbf{U}(1) \times  \mathbf{U}(1) \times  \mathbf{U}(1)) =\left\{ 
\left[\begin{array}{ccc}e^{i\theta} &  &  \\ & e^{i\phi} &  \\ &  & e^{-i(\theta + \phi)}\end{array}\right]
\right\}~,
$$
lifted in the obvious way to act on  the  three-point blow-up. 
In point of fact, Matsushima's theorem \cite{mats} tells us that invariance under the torus action is   automatic here,
 because $\mathbb{T}^2/\ZZ_3$
is actually the unique maximal compact subgroup of the identity component  $(\CC^\times \times \CC^\times)/\ZZ_3$ of the 
complex automorphism group 
of $\CP_2\# 3 \overline{\CP}_2$.  By contrast, its invariance with respect to the specific  finite group  
 $\mathfrak{S}_3$ generated by the $\{ \alpha_j\}$, together with the normalization of choosing the Einstein constant to be $1$,
 {\em uniquely} picks out Siu's K\"ahler-Einstein metric $g$. Indeed, the Bando-Mabuchi uniqueness theorem \cite{bandomab} tells us that
 any other $J$-compatible K\"ahler-Einstein metric $\widehat{g}$ on $\CP_2\# 3 \overline{\CP}_2$ with Einstein constant $1$ 
 must be obtained from $g$ by 
 moving it  by an element of the connected component of $(\CC^\times \times \CC^\times)/\ZZ_3$ of the 
complex automorphism group. 
However, any such rival Einstein metric $\widehat{g}\neq g$ is then invariant under a different  representation of  $\mathfrak{S}_3$, where the 
generators $\widehat{\alpha}_j= A^{-1}\circ  \alpha_j\circ A$ have been conjugated by a diagonal matrix $A$ of determinant $1$ whose 
eigenvalues do not all have norm $1$. If $\widehat{g}$ were also invariant under the original $\alpha_j$, it would then 
be invariant  $\alpha_j \circ A^{-1}\circ  \alpha_j\circ A\in \CC^\times \times \CC^\times$ for each $j=1,2,3$, and the powers of at least one
such diagonal matrix will then diverge in $\CC^\times \times \CC^\times$. But this is a contradiction,
 since the isometry group of any compact Riemannian
manifold is compact. This proves that Siu's K\"ahler-Einstein metric is uniquely determined by  its $\mathfrak{S}_3$-invariance, 
together with our  (arbitrarily chosen) normalization of its Einstein constant.

 Now notice that 
 $$ \Upsilon \circ \alpha_j  =    \beta_j \circ \alpha_j \circ \Upsilon, \quad j=1,2,3,$$
 where the $\beta_j \in \mathbb{T}^2$ are defined by
 $$
  \beta_1 = \left[\begin{array}{ccc} 1&  &  \\ & -1 &  \\  &  & -1 \end{array}\right], 
\quad 
\beta_2 = \left[\begin{array}{ccc} 1&  &  \\ &  1&  \\  &  & 1\end{array}\right],
\quad 
\beta_3 = \left[\begin{array}{ccc} -1&  &  \\ & -1 &  \\ &  & 1\end{array}\right] . 
 $$
 Since the K\"ahler-Einstein metric $g$ is compatible with both $J$ and $-J$, and since $\Upsilon$
 just interchanges these two integrable complex structures, it follows that $\widehat{g}:=\Upsilon^*g$ is  a $J$-compatible 
 $\lambda=+1$ K\"ahler-Einstein metric. But, using the invariance of $g$ under  $\alpha_j$ and $\beta_j$,  we now see  that 
 \begin{eqnarray*}
\alpha_j^*\widehat{g}&=& \alpha_j^* \Upsilon^* g = (\Upsilon \circ \alpha_j)^*g\\
&=& (\beta_j \circ \alpha_j \circ \Upsilon)^*g = \Upsilon^* (\beta_j \circ \alpha_j )^*g \\
&=& \Upsilon^*g = \widehat{g}.
\end{eqnarray*}
This shows that $\widehat{g}$ is another $\lambda=+1$ K\"ahler-Einstein metric that is invariant under the 
action of $\mathfrak{S}_3$ generated by the $\{ \alpha_j \}$. But since Siu's metric is uniquely characterized by these properties, we must
 have $g = \widehat{g} = \Upsilon^*g$. Thus, the free anti-holomorphic involution  $\Upsilon$ is an isometry 
 of $(\CP_2\# 3 \overline{\CP}_2, g)$, and $g$ therefore descends to $\mathscr{Q}\# \overline{\CP}_2=
 (\CP_2\# 3 \overline{\CP}_2)/\langle \Upsilon \rangle$ as an Einstein metric $h$ with $\det (W^+) > 0$
 everywhere. Moreover, since there is only one del Pezzo surface of degree $6$, 
 Proposition \ref{rule} and  the Bando-Mabuchi uniquess theorem together 
 guarantee that any other compact  oriented  Einstein manifold $(M^4, h^\prime)$ 
with $\pi_1\neq 0$, $b_2=1$,  and $\det (W^+)>0$  must  be isometric to a rescaled version of this Einstein manifold $(\mathscr{Q}\# \overline{\CP}_2, h)$. 
\end{proof}

\subsection{The $b_2(M)=2$ Case}


When $b_2(M)=2$, the  del Pezzo surface
  $(\widetilde{M},J)$ has  degree $c_1^2= 4$.
Because this complex surface has 
 $K^{-1}$ ample, the  Riemann-Roch-Hirzebruch and Kodaira vanishing theorems  
immediately  tell us that $h^0 (\widetilde{M}, \mathcal{O} (K^{-1}))= 5$, and  $h^0 (\widetilde{M}, \mathcal{O} (K^{-2}))= 13$.
On the other hand, surface classification tells us that $\widetilde{M}$ must be obtained by blowing up $\CP_2$ at five distinct points, 
no three of which are collinear.  Using these  facts, one can then deduce \cite{delpezzo,dolgachev} that the anti-canonical system 
embeds $\widetilde{M}$ in $\mathbb{P}([H^0 (\mathcal{O} (K^{-1})]^*)\cong \CP_4$, and that the image of 
$(\widetilde{M},J)$  is actually  the transverse intersection of two non-singular quadrics in $\CP_4$. In our case, though, we
also have an anti-hololomorphic involution $\sigma : \widetilde{M}\to \widetilde{M}$, and this then induces
 a complex-anti-linear involution
$$\sigma^* : [H^0 (\mathcal{O} (K^{-1}))]^*\to [H^0 (\mathcal{O} (K^{-1}))]^*$$
that looks like   component-wise complex conjugation in  $\CC^5$.  Obviously, the  image
of $\widetilde{M}$ is automatically invariant under the involution of $\CP_4$ induced by $\sigma^*$, and this involution 
 moreover restricts to $\widetilde{M}$ 
as  the given  anti-holomorphic involution $\sigma$. In addition, there is an induced complex-anti-linear  involution 
$$\sigma_* : H^0 (\mathcal{O} (K^{-2}))\to H^0 (\mathcal{O} (K^{-2}))$$
that is compatible  with the one induced by $\sigma^*$ on the $15$-dimensional space $\odot^2H^0 (\mathcal{O} (K^{-1}))$
 of homogeneous quadratic polynomials.
The $2$-dimensional kernel of the restriction map $\odot^2H^0 (\mathcal{O} (K^{-1}))\to H^0 (\mathcal{O} (K^{-2}))$
therefore also carries an induced complex conjugation map. Taking a generic real basis for this space, 
we thus see  that $\widetilde{M}\subset \CP_4$ is actually  the transverse intersection of two non-singular quadrics {\em with real coefficients},
but with disjoint real loci. 
By choosing a suitable basis for the real homogeneous polynomials vanishing on  $\widetilde{M}$, and then 
 altering our homogeneous  coordinates by the action of $\mathbf{GL}(5, \RR)$, we may thus arrange for 
 $\widetilde{M}$ to be cut out \cite{ctcpencils,ctcdp} by the equations 
 $$0= \sum_{j=1}^5  z_j^2 = \sum_{j=1}^5  a_j z_j^2$$
where $a_1, \ldots , a_5$ are distinct real numbers. Conversely, any such choice of the coefficients $a_j$
defines a degree-four del Pezzo surface $\widetilde{M}$ with free anti-holomorphic involution $\sigma$;  the requirement
that the coefficients $a_j$ be distinct is  exactly equivalent to requiring that  intersection of the given quadrics be smooth. 
Replacing these  quadrics with linear combinations 
and then rescaling our coordinates 
has the effect of replacing  $a_1, \ldots , a_5$ with their images under a fractional linear transformation of 
$\RR$,  so we may further refine our normal form so   that $a_1=1$, $a_2=2$, $a_3=3$, and $3< a_4 < a_5$. 
This not only shows that the moduli space of smooth degree-four del Pezzo surfaces  with free anti-holomorphic involution is {\em connected} \cite{russo,ctcdp},  but also reveals  that this moduli space 
has real dimension $2$. 
 
Now, every smooth degree-four del Pezzo surface admits a $J$-compatible K\"ahler-Einstein metric \cite{sunspot,ty}. Moreover, 
since there are no non-trivial holomorphic vector fields on such a del Pezzo, the uniqueness theorem of Bando-Mabuchi
guarantees that this $J$-compatible K\"ahler-Einstein metric $g$ is completely {\em unique} once we 
exclude non-trivial constant rescalings by, for example,  normalizing the Einstein constant. 
However, if $g$ is a K\"ahler-Einstein metric, then $\sigma^* g$ is also K\"ahler-Einstein. Moreover, since 
$g$ is compatible with the two integrable almost-complex structures $\pm J$, the same is true of  
$\sigma^* g$, since the anti-holomorphic involution $\sigma$ exactly 
interchanges $J$ and $-J$. Since the Einstein metrics $g$ and $\sigma^* g$ also have the same Einstein constant,
it thus follows that $g = \sigma^* g$. Since  that the Einstein metric $g$ is  therefore $\sigma$-invariant. 
it pushes down to a unique Einstein metric $h$ on $M=\widetilde{M}/\langle\sigma \rangle$. We have thus arranged for 
 $g$ to become the pull-back $\widetilde{h}$ of an Einstein metric $h$ on $M$ with $\det (W^+)> 0$. To summarize: 

\begin{prop} Any compact oriented,  Einstein manifold $(M^4,h)$ with \linebreak  $\pi_1\neq 0$,  $b_2=2$, and  $\det (W^+)> 0$ is 
orientedly diffeomorphic to $\mathscr{Q}\# 2\overline{\CP}_2$, and  is doubly covered by  a degree-four del Pezzo surface equipped 
with a fixed-point-free free anti-holomorphic involution. Moreover, the moduli space $\mathscr{E}_{\det}(\mathscr{Q}\# 2\overline{\CP}_2)$ of these special 
 Einstein metrics  
is non-empty, connected, and of real dimension $2$. 
\label{pencil}
\end{prop}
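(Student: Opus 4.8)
The plan is to collect the analysis of this subsection into the stated conclusions, and then separately pin down the precise oriented diffeotype. The double-cover assertion and the non-emptiness of $\mathscr{E}_{\det}(M)$ are essentially already in hand: Proposition~\ref{rule} places us in case~\eqref{ahi}, the degree computation at the head of \S\ref{double} identifies the cover $\widetilde{M}$ as a degree-four del Pezzo surface carrying a free anti-holomorphic involution $\sigma$, and the discussion above normalizes $\widetilde{M}\subset\CP_4$ as the transverse intersection of two real quadrics with disjoint real loci, produces the connected, real two-dimensional parameter region $3<a_4<a_5$ of such configurations, and shows that the unique normalized $J$-compatible K\"ahler-Einstein metric $g$ satisfies $\sigma^*g=g$ and hence descends to an Einstein metric $h$ on $M$ with $\det(W^+)>0$.

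First I would determine the oriented diffeotype. As $(a_4,a_5)$ ranges over the connected region $3<a_4<a_5$, the surfaces $\widetilde{M}$ (all diffeomorphic to $\CP_2\#5\overline{\CP}_2$) and their free involutions $\sigma$ vary smoothly, so the quotients $M=\widetilde{M}/\langle\sigma\rangle$ fit into a locally trivial smooth fiber bundle over a connected base; Ehresmann's fibration theorem then shows that all these quotients are mutually diffeomorphic. Moreover, since an anti-holomorphic map of a complex surface is orientation-preserving, $\sigma$ respects the complex orientation of $\widetilde{M}$, the induced orientation on $M$ varies continuously, and the \emph{oriented} diffeotype is therefore constant across the family. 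It thus suffices to compute it for one representative. For this I would take the universal cover $\CP_1\times\CP_1$ of $\mathscr{Q}$, equipped with the free anti-holomorphic involution $\mathfrak{a}\times\mathfrak{a}$ of \eqref{mind}, and blow it up at two generic free orbits, i.e. at two $\sigma$-conjugate pairs of points. For a generic such choice the result is a smooth degree-four del Pezzo surface in general position, while the lifted involution $\widehat{\sigma}$ is again free, since it interchanges the two exceptional divisors lying over each orbit and so fixes no point of the exceptional locus. This pair therefore lies in our connected family, and its quotient is visibly the two-point blow-up of $\mathscr{Q}$, namely $\mathscr{Q}\#2\overline{\CP}_2$. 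Constancy of the oriented diffeotype then identifies every such $M$ with $\mathscr{Q}\#2\overline{\CP}_2$.

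Next I would read off the structure of $\mathscr{E}_{\det}(\mathscr{Q}\#2\overline{\CP}_2)$. The assignment sending a degree-four del Pezzo with free anti-holomorphic involution $(\widetilde{M},\sigma)$ to its descended normalized Einstein manifold $(M,h)$ is a bijection onto this moduli space: conversely, from $(M,h)$ one recovers the Riemannian double cover $(\widetilde{M},\widetilde{h})$, the K\"ahler metric $\widetilde{g}=|W^+|^{2/3}_{\widetilde{h}}\widetilde{h}$, the complex structure $J$ up to the harmless ambiguity $J\leftrightarrow-J$, and the deck transformation $\sigma$. Because a degree-four del Pezzo carries no nonzero holomorphic vector fields, the Bando-Mabuchi theorem shows that the normalized K\"ahler-Einstein metric is the unique one compatible with $J$ and contributes no further moduli, so distinct pairs never yield isometric $(M,h)$. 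Hence $\mathscr{E}_{\det}(\mathscr{Q}\#2\overline{\CP}_2)$ is in bijection with the connected, real two-dimensional parameter region above, which gives the asserted non-emptiness, connectedness, and dimension.

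The main obstacle is the diffeotype step. The delicate points are that the blown-up model must be confirmed to lie in the \emph{same} connected component as the generic real-quadric normal form — which is exactly where the connectedness established in the discussion above is indispensable — that a generic choice of conjugate pairs simultaneously produces a del Pezzo in general position and a genuinely \emph{free} lifted involution, and that the orientation bookkeeping between the complex orientation of $\widetilde{M}$ and the standard orientation of $\mathscr{Q}\#2\overline{\CP}_2$ is carried through consistently.
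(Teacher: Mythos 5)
Your proposal is correct and follows essentially the same route as the paper: the real-quadric-pencil normal form with its connected, two-dimensional parameter space, existence of K\"ahler--Einstein metrics on degree-four del Pezzos, and Bando--Mabuchi uniqueness forcing $\sigma^*g=g$ and descent to the quotient. Your Ehresmann-plus-blown-up-model argument for pinning down the oriented diffeotype as $\mathscr{Q}\#2\overline{\CP}_2$ is a careful spelling-out of a step the paper leaves implicit (it records the analogous blow-up identification only in the $b_2=1$ discussion), and is a welcome addition rather than a departure.
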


\subsection{The $b_2(M)=3$ Case}

We finally  come to the  case where $b_2(M)=3$, and where  $(\widetilde{M},J)$ is 
 a   del Pezzo surface
of  degree $c_1^2= 2$. This time,   Riemann-Roch-Hirzebruch and Kodaira vanishing 
 tell us that $h^0 (\widetilde{M}, \mathcal{O} (K^{-1}))= 3$, while the classification of rational surfaces  tells us that  $(\widetilde{M},J)$
 is obtained from $\CP_2$ by blowing up $7$ points, with no three of them  collinear, and no six  on a conic. 
 This can then be used \cite{delpezzo,dolgachev} to show that the anti-canonical system is base-point free, and so 
 defines a degree-$2$ holomorphic map 
 $$\widetilde{M}\to \mathbb{P}([H^0 (\mathcal{O} (K^{-1})]^*)\cong \CP_2;$$ 
further use of the ampleness of $K^{-1}$ then reveals that   $(\widetilde{M},J)$ 
is therefore a  branched double of the projective plane, with  branch locus a smooth quartic curve. 
Thus, $\widetilde{M}$ is biholomorphic to the subvariety of $\mathcal{O}(2) \to \CP_2$,
given by $\zeta^2 = - f(z_1, z_2, z_3)$, where $[z_1,z_2,z_3]\in \CP_2$, the fiber-coordinate $\zeta$ is
homogeneous of degree $2$ in $(z_1,z_2, z_3)$, and where $f\in H^0 (\CP_2 , \mathcal{O}(4))$ vanishes along  a smooth
quartic plane curve $\Sigma$. 

However, in our case, we also have a fixed-point-free anti-holomorphic involution $\sigma: \widetilde{M}\to \widetilde{M}$,
and the induced anti-holomorphic  action of this involution on the line bundle $K^{-1}\to \widetilde{M}$ then 
 induces a standard complex conjugation map on $[H^0 (\mathcal{O} (K^{-1})]^*\cong \CC^3$. The
 induced anti-holomorphic action on $\CP_2$ then preserves the branch locus, and acts on 
 $\Sigma$ without  fixed points. We may thus take the defining equation $f$ of $\Sigma$ to be real,
 and everywhere positive on $\RP^2 \subset \CP_2$. Fortunately, the moduli space
 of such smooth real quartics without real points has been studied extensively, 
 and is know to be connected. Indeed, it can be naturally identified \cite{quartics} with a
 specific arithmetic quotient of hyperbolic $6$-space. 
 
 For each such real quartic curve, we conversely obtain a unique degree-two del Pezzo surface $(\widetilde{M},J)$
 given by  $\zeta^2 = - f(z_1, z_2, z_3)$, and which is equipped a  with
 fixed-point-free anti-holomorphic involution $\sigma : \widetilde{M}\to \widetilde{M}$ given by 
 $(z_1,z_2,z_3, \zeta)\mapsto (\bar{z}_1,\bar{z}_2,\bar{z}_3, \bar{\zeta})$. But any 
 degree-two del Pezzo  admits  \cite{sunspot} a $J$-compatible K\"ahler-Einstein metric $g$, and pulling back
 this metric by our anti-holomorphic involution then gives a second K\"ahler-Einstein metric $\sigma^* g$ 
 with the same Einstein constant. But the del Pezzo surface  $\widetilde{M}$ is also biholomorphic to 
 a blow-up of $\CP_2$ at seven points in general position, it carries no holomorphic vector fields. 
 Thus, the Bando-Mabuchi uniqueness theorem tells us that $\sigma^*g =g$. It therefore follows that 
 $g$ descends to the quotient $M= \widetilde{M}/\langle \sigma \rangle$ as a uniquely defined Einstein metric
 $h$ with $\det (W^+) > 0$, thereby making $g$ equal its pull-back $\widetilde{h}$. We have thus proved the following:

\begin{prop} 
Any compact oriented,  Einstein manifold $(M^4,h)$ with \linebreak  $\pi_1\neq 0$,  $b_2=3$, and  $\det (W^+)> 0$ is 
orientedly diffeomorphic to $\mathscr{Q}\# 3\overline{\CP}_2$, and  is doubly covered by  a degree-two  del Pezzo surface equipped 
with a fixed-point-free free anti-holomorphic involution. Moreover, the moduli space $\mathscr{E}_{\det}(\mathscr{Q}\# 2\overline{\CP}_2)$ of these special 
 Einstein metrics  
is non-empty, connected, and of real dimension $6$. 
\label{branch}
\end{prop}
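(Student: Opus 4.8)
The plan is to follow the same three-part template used for the lower-$b_2$ cases, since almost all of the analytic and algebro-geometric content is already in hand. First I would invoke Proposition \ref{rule} (case (ii)) together with Theorem \ref{katydid}: because $\pi_1(M) \neq 0$, the manifold $M$ is doubly covered by a del Pezzo surface $(\widetilde{M}, J)$ carrying a $J$-compatible K\"ahler--Einstein metric $g = \widetilde{h}$, with deck transformation $\sigma$ a fixed-point-free anti-holomorphic involution. The degree formula $c_1^2(\widetilde{M}) = 2[4 - b_2(M)]$ specialized to $b_2 = 3$ forces $c_1^2(\widetilde{M}) = 2$, so $\widetilde{M}$ is a degree-two del Pezzo, and the branched-double-cover description over a smooth real plane quartic $\Sigma$ with empty real locus, recorded in the discussion above, applies. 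That $g$ descends to $M$ is automatic from Bando--Mabuchi: $\widetilde{M}$ is a blow-up of $\CP_2$ at seven general points, hence carries no holomorphic vector fields, so the K\"ahler--Einstein metric of a fixed Einstein constant is unique and therefore $\sigma$-invariant. Positivity of $\det(W^+)$ is inherited from the identity $\det(W^+) = s^3/864$ on K\"ahler surfaces, exactly as in Proposition \ref{rule}.

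Next I would establish non-emptiness together with the structure of the moduli space. Conversely, every smooth real quartic $f$ with $f|_{\RP^2} > 0$ produces a degree-two del Pezzo $\zeta^2 = -f$ equipped with the free anti-holomorphic involution $(z,\zeta) \mapsto (\bar z, \bar\zeta)$; by \cite{sunspot} it admits a K\"ahler--Einstein metric, unique by Bando--Mabuchi, hence $\sigma$-invariant, and so descending to a $\det(W^+) > 0$ Einstein metric on the quotient. Since the Einstein metric is thus pinned down by the pair $(\widetilde{M}, \sigma)$, the map sending such a metric to its associated real quartic identifies $\mathscr{E}_{\det}(M)$ with the moduli space of smooth real plane quartics without real points. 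Parameter counting (quartics form $\RP^{14}$, modulo $\mathbf{PGL}(3,\RR)$ of dimension $8$) gives real dimension $6$; and by \cite{quartics} this moduli space is an arithmetic quotient of hyperbolic $6$-space, hence connected. This yields every clause of the final sentence of the proposition.

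The delicate step is pinning down the diffeomorphism type as $\mathscr{Q} \# 3\overline{\CP}_2$, and here I would argue by exhibiting one explicit member of the connected family and then propagating. Starting from $\CP_1 \times \CP_1$ with the free involution $\mathfrak{a} \times \mathfrak{a}$ (the double cover of $\mathscr{Q}$), I would blow up three generic $\sigma$-orbits, i.e.\ six points in general position; the result is a degree-two del Pezzo whose lifted involution is still free and anti-holomorphic, and whose quotient is, by construction, the blow-up of $\mathscr{Q}$ at three points, namely $\mathscr{Q} \# 3\overline{\CP}_2$. Since all degree-two del Pezzos with free anti-holomorphic involution correspond to real quartics without real points, this model lies in the connected moduli space just described; the free quotients assemble into a smooth fiber bundle over that connected base, so Ehresmann's theorem makes all the quotient $4$-manifolds mutually diffeomorphic, whence each is diffeomorphic to $\mathscr{Q} \# 3\overline{\CP}_2$.

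The main obstacle I anticipate is precisely this last identification: one must verify that the three orbits can be chosen so that the six blown-up points are in general position (no three collinear, no six on a conic), so that the blow-up is genuinely a del Pezzo, and must confirm that the quotient of this model is $\mathscr{Q} \# 3\overline{\CP}_2$ rather than some homotopy-equivalent impostor. (Recall that the Cremona equivalence used in the $b_2 = 1$ case already shows $\mathscr{P} \# k\overline{\CP}_2 \cong \mathscr{Q} \# k\overline{\CP}_2$ for $k \geq 1$, so the choice of $\mathscr{Q}$ here is merely a normalization.) Everything else is a routine transcription of the cited real-algebraic-geometry and Bando--Mabuchi inputs.
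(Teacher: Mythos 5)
Your proposal is correct and follows essentially the same route as the paper's proof: the same reduction via case (\ref{ahi}) of Proposition \ref{rule}, Theorem \ref{katydid}, and the degree formula $c_1^2(\widetilde{M})=2[4-b_2(M)]$; the same description of $\widetilde{M}$ as the double cover $\zeta^2=-f$ of $\CP_2$ branched over a smooth real quartic with $f|_{\RP^2}>0$; the same appeal to \cite{sunspot} and Bando--Mabuchi for existence, uniqueness, and $\sigma$-invariance of the K\"ahler--Einstein metric; and the same identification, via \cite{quartics}, of the moduli space with a connected arithmetic quotient of hyperbolic $6$-space of dimension $6$. The one point where you go beyond the text is the diffeotype identification $M\approx\mathscr{Q}\#3\overline{\CP}_2$, which the paper asserts but leaves implicit: your argument (blow up three generic $(\mathfrak{a}\times\mathfrak{a})$-orbits on $\CP_1\times\CP_1$ to get an explicit member of the family, then propagate by Ehresmann) is the right way to complete this, with the minor caveat that the bundle argument should be run over the connected parameter space of smooth real quartics without real points rather than over the orbifold moduli space itself.
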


\subsection{Proofs of the Main Theorems}
\label{mainline} 

By putting together  the above results, it is  now straightforward to prove  our main theorems. For the sake of clarity, we  will 
do so in reverse order.

\medskip

\noindent 
{\sf Proof of Theorem \ref{beta}.} If $(M,h)$ is a compact oriented Einstein $4$-manifold with $\pi_1\neq 0$ and 
$\det (W^+)> 0$,  case \eqref{ahi} of Proposition \ref{rule} tells us that $M= \widetilde{M}/\langle \sigma \rangle$,
where $\widetilde{M}$ is a del Pezzo surface, and $\sigma$ is a fixed-point-free anti-holomorphic involution;
moreover, Theorem \ref{katydid} tells us that the pull-back $\widetilde{h}$ to $\widetilde{M}$ is actually K\"ahler-Einstein.

Because $4 -b_2(M) = \frac{1}{2} c_1^2 (\widetilde{M}) > 0$, the only possible values of 
$b_2(M)$ are $0,1,2$, or $3$,  and we have thoroughly  analyzed each of these possibilities. 
When $b_2(M)=0$, Proposition \ref{twofer} tells us that $M$ must be diffeomorphic to $\mathscr{P}$ or $\mathscr{Q}$;
both cases actually arise, and they are topologically distinct, because only one of them is spin. 
When $b_2(M)=1$, Proposition \ref{persius} tells us that $M$ must be diffeomorphic to $\mathscr{Q} \# \overline{\CP}_2$,
and that this manifold actually carries an Einstein metric of the required type. 
When  $b_2(M)=2$, $M$ must instead be diffeomorphic to $\mathscr{Q} \# 2\overline{\CP}_2$
by Proposition \ref{pencil}, which also tells us that this manifold actually carries a family of Einstein metrics with the required property. 
Finally, when  $b_2(M)=3$,
Proposition \ref{branch} tells us that $M$ is necessarily diffeomorphic to  $\mathscr{Q} \# 3\overline{\CP}_2$,
and that this manifold actually carries   a family of such  Einstein metrics. 
 \hfill $\Box$

\medskip

\noindent 
{\sf Proof of Theorem \ref{alpha}.}  
By Theorem \ref{beta}, there are exactly five diffeotypes of {\em non-simply connected} compact oriented $4$-manifolds $M$
that carry Einstein manifolds with $\det (W^+) > 0$  everywhere; namely, these are 
$\mathscr{P}$ and $\mathscr{Q}\# k \overline{\CP}_2$ for $k=0,1,2,$ and $3$. Moreover, 
Propositions \ref{twofer}, \ref{persius}, \ref{pencil}, and \ref{branch} tell us that in each case  the moduli
space $\mathscr{E}_{\det}(M)$ of these special Einstein metrics is actually connected. 
In addition, there are \cite{lebdet} exactly ten {\em simply connected} diffeotypes of compact oriented $4$-manifolds that
carry such metrics, corresponding to  the ten deformation types of del Pezzo surfaces;  for each 
such diffeotype, our moduli space $\mathscr{E}_{\det}(M)$ of special Einstein metrics is connected, because it 
is in fact 
exactly 
the (connected) moduli space of del Pezzo complex structures, modulo the $\ZZ_2$-action 
induced by $J\rightsquigarrow -J$. 
Taken together, this means there are  exactly 15 possible diffeotypes, and that in each case
the moduli space  $\mathscr{E}_{\det}(M)$ of these special Einstein metrics is both non-empty 
and connected. 

Finally, the moduli space $\mathscr{E}_{\det}(M)$ of 
these special Einstein  metrics is always both open and closed as a subset of the  moduli space $\mathscr{E}(M)$
of {\em all} Einstein metrics. Indeed, by \cite{lebdet}, these special Einstein metrics  are characterized 
among all Einstein metrics by the open condition 
$\det (W^+) > 0$; whereas  results of Derdzi\'nski \cite[Theorem 2]{derd} and Hitchin \cite[Theorem 13.30]{bes} instead characterize them, 
among Einstein metrics on these spaces,  
 by 
the pair of  closed conditions $\det (W^+) = \frac{1}{3\sqrt{6}} |W^+|^3$ and $s\geq 0$. Thus, for  each of these fifteen $4$-manifolds $M$, 
 the connected
space $\mathscr{E}_{\det}(M)$ is precisely a single connected component of the Einstein moduli space  $\mathscr{E}(M)$. 
 \hfill $\Box$

\section{Related Results} 
\label{last} 

For clarity and simplicity, we have  supposed throughout this article 
that the Einstein metrics $h$ under investigation satisfied  Wu's condition 
$\det (W^+) > 0$. However,  by  \cite[Theorem C]{lebdet},  we could have actually  reached exactly the same conclusions 
if we had  merely imposed an ostensibly  weaker hypothesis:

\begin{thm}
\label{delsegno}
Let $(M,h)$ be a compact 
 oriented Einstein  $4$-manifold. If 
 \begin{equation}
\label{home}
W^+\neq 0 \quad \mbox{and}\quad  \det (W^+) \geq - \frac{5\sqrt{2}}{21\sqrt{21}}|W^+|^3
\end{equation}
 at every point of  $M$, then  $(M,h)$ actually satisfies  $\det (W^+) > 0$ everywhere. Consequently,  by Theorem \ref{alpha}, there 
 are exactly 15 diffeotypes of $4$-manifolds $M$ that  carry such Einstein metrics, and their  moduli space $\mathscr{E}_{\det}(M)$ 
 is in each case exactly a connected component of the  Einstein moduli space $\mathscr{E}(M)$. 
\end{thm}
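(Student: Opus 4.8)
The plan is to prove Theorem \ref{delsegno} by reducing the apparently weaker hypothesis \eqref{home} to the strict positivity $\det(W^+)>0$ already treated in Theorem \ref{alpha}, and the natural vehicle for this reduction is a pointwise algebraic analysis of the self-adjoint trace-free endomorphism $W^+:\Lambda^+\to\Lambda^+$ combined with the global rigidity supplied by the Einstein condition $\delta W^+=0$. Since $W^+$ is trace-free and acts on the rank-$3$ bundle $\Lambda^+$, at each point its eigenvalues $\mu_1\geq\mu_2\geq\mu_3$ satisfy $\mu_1+\mu_2+\mu_3=0$, and both $|W^+|^2=\mu_1^2+\mu_2^2+\mu_3^2$ and $\det(W^+)=\mu_1\mu_2\mu_3$ are symmetric functions of these eigenvalues. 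The first step is therefore to understand exactly what the inequality $\det(W^+)\geq-\tfrac{5\sqrt{2}}{21\sqrt{21}}|W^+|^3$ says about the permissible configurations of eigenvalues on the constraint surface $\sum\mu_i=0$, normalizing by $|W^+|=1$.

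The key observation I would exploit is that on the unit sphere in the trace-free part of symmetric $3\times3$ matrices, the function $\det$ attains its minimum precisely at matrices of the form $\mathrm{diag}(\mu,\mu,-2\mu)$ with $\mu>0$ (one simple negative eigenvalue, one repeated positive eigenvalue), where a direct computation gives $|W^+|^2=6\mu^2$ and $\det=-2\mu^3$, so that the extremal ratio is $\det/|W^+|^3 = -2\mu^3/(6\mu^2)^{3/2} = -\tfrac{1}{3\sqrt{6}}$. The constant $\tfrac{5\sqrt{2}}{21\sqrt{21}}$ appearing in \eqref{home} is strictly smaller than $\tfrac{1}{3\sqrt{6}}$, so the hypothesis \eqref{home} carves out an \emph{open neighborhood} of the locus $\{\det>0\}$ together with a thin collar reaching slightly into $\{\det<0\}$, but staying bounded away from the extremal minimum ratio. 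The arithmetic check $\tfrac{5\sqrt{2}}{21\sqrt{21}}<\tfrac{1}{3\sqrt{6}}$ is routine; the content is that \eqref{home} forbids the eigenvalue pattern from ever approaching the critical $(\mu,\mu,-2\mu)$ shape, which is exactly the degenerate configuration that a Weitzenb\"ock/maximum-principle argument would otherwise have to contend with.

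Given this pointwise input, I would invoke the curvature estimate of \cite[Theorem C]{lebdet} as the engine that upgrades the \emph{local} inequality to the \emph{global} conclusion $\det(W^+)>0$. The mechanism is that for an Einstein $4$-manifold, the Weitzenb\"ock formula for the self-dual Weyl curvature produces a differential inequality for a suitable scalar built from the eigenvalues of $W^+$ — morally, $\Delta$ applied to (a power of) $|W^+|$ controls a cubic expression in the eigenvalues whose sign is governed by precisely where the eigenvalue configuration sits relative to the critical ratio. The hypothesis \eqref{home}, by keeping us strictly away from the extremal minimizing configuration, guarantees the relevant sign, so that a maximum-principle argument on the compact manifold $M$ forces the non-positive eigenvalue situation to be excluded entirely, yielding $\det(W^+)>0$ at every point. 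I expect the main obstacle to be bookkeeping the exact constant: one must verify that $\tfrac{5\sqrt{2}}{21\sqrt{21}}$ is the sharp threshold below which the Weitzenb\"ock differential inequality retains the definite sign needed for the maximum principle, rather than merely some convenient sufficient bound. Once the reduction to $\det(W^+)>0$ is in hand, Theorem \ref{alpha} applies verbatim, giving the fifteen diffeotypes and the identification of $\mathscr{E}_{\det}(M)$ as a single connected component of $\mathscr{E}(M)$, which completes the proof.
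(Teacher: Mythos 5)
Your proposal is correct and follows essentially the same route as the paper: the paper's entire proof of Theorem \ref{delsegno} consists of quoting \cite[Theorem C]{lebdet}, whose hypothesis is verbatim \eqref{home}, to get $\det(W^+)>0$ everywhere, and then applying Theorem \ref{alpha}. Your preliminary eigenvalue analysis is accurate (the minimum of $\det(W^+)/|W^+|^3$ over nonzero trace-free $W^+$ is indeed $-\tfrac{1}{3\sqrt{6}}$, attained at configurations $(\mu,\mu,-2\mu)$ with $\mu>0$, and $\tfrac{5\sqrt{2}}{21\sqrt{21}}<\tfrac{1}{3\sqrt{6}}$), but both it and your concern about re-verifying the sharpness of the constant are unnecessary once \cite[Theorem C]{lebdet} is invoked as stated, since the constant in \eqref{home} is by design exactly the one appearing there.
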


In  fact, the results of \cite{lebdet}  apply more generally to oriented  Riemannian $4$-manifolds that 
satisfy  $\delta W^+=0$ and \eqref{home}. This led there to a complete diffeomorphism classification of all such manifolds 
with  $b_+\neq 0$. Regarding  the $b_+=0$, 
 case, we can now at least say the following: 
  
%
%

\begin{thm}
\label{alfine}
Let $(M,h)$ be a compact 
 oriented Riemannian  $4$-manifold with $\delta W^+=0$, and suppose that $b_+(M)=0$. If
 $h$ satisfies \eqref{home} at every point, then  $M$ admits a double 
 cover $\widetilde{M}$ that is 
 diffeomorphic to $(\Sigma_{\mathfrak{g}} \times S^2) \# 2k \overline{\CP_2}$, where $k$ and $\mathfrak{g}$ are   non-negative   integers,
 and where $\Sigma_{\mathfrak{g}}$ is the
 compact oriented surface of genus $\mathfrak{g}$. Conversely, each of these possibilities occurs:
 for every pair $(k,\mathfrak{g})$ of non-negative integers, 
 there is a  compact oriented $4$-manifold $(M,h)$ with $b_+=0$, $\delta W^+=0$, and $\det (W^+) > 0$
that  is doubly covered by a manifold $\widetilde{M}$ diffeomorphic to 
 $(\Sigma_{\mathfrak{g}} \times S^2) \# 2k \overline{\CP_2}$. 
\end{thm}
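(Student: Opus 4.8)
The plan is to run the arguments of \S\ref{double} with the Einstein hypothesis weakened to $\delta W^+=0$, bringing in $b_+(M)=0$ at the decisive moment. First I would apply Theorem \ref{delsegno}, in the $\delta W^+=0$ generality noted above, to promote \eqref{home} to $\det(W^+)>0$ everywhere. Since the Weitzenb\"ock argument behind Proposition \ref{rule} uses only $\delta W^+=0$, the same dichotomy survives: either $M$ itself admits a complex structure making $g=|W^+|_h^{2/3}h$ K\"ahler, or $M$ is doubly covered by such a K\"ahler surface $(\widetilde M,\widetilde g)$ whose deck transformation $\sigma$ is a free anti-holomorphic involution. The one thing lost without the Einstein equation is the del Pezzo conclusion; I retain only that $\det(W^+_g)=s_g^3/864>0$, so that the K\"ahler metric $g$ has $s_g>0$ everywhere.

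Now $b_+(M)=0$ takes over the role of the Einstein equation. Because every compact K\"ahler surface has $b_+=2p_g+1\geq 1$, the first alternative is impossible, and $M$ must be the quotient $\widetilde M/\langle\sigma\rangle$. On the cover, $s_{\widetilde g}>0$ forces $c_1\cdot[\omega_{\widetilde g}]>0$, so $K_{\widetilde M}$ pairs negatively with a K\"ahler class; hence $\widetilde M$ has Kodaira dimension $-\infty$, and the Enriques--Kodaira classification makes it a rational or ruled surface, with $b_+(\widetilde M)=1$. (Conversely $b_+(M)=0$ is then automatic, since the K\"ahler form spans $\mathcal H^+(\widetilde M)$ and is reversed by $\sigma$.)

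The diffeotype of $\widetilde M$ I would pin down through a parity obstruction. If $\sigma$ preserves a rational curve and acts on it freely -- necessarily as the antipodal map -- then $d\sigma$ induces a genuine involution ($\nu^2=+1$) of the normal bundle covering the antipodal map, and such a real structure exists only when the normal degree is even. This immediately rules out $\sigma$-fixed $(-1)$-curves, so every exceptional curve occurs in a disjoint conjugate pair; blowing these down two at a time, $\sigma$-equivariantly, exhibits $\widetilde M\cong X_0 \# 2k\,\overline{\CP}_2$ with $X_0$ geometrically ruled over a genus-$\mathfrak g$ curve and carrying an induced free $\sigma$ (and $X_0\neq\CP_2$, which has no free involution since $\chi=3$). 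Applied now to a negative section, the same obstruction excludes the non-spin $S^2$-bundle, forcing $X_0\cong\Sigma_{\mathfrak g}\times S^2$ and hence $\widetilde M\cong(\Sigma_{\mathfrak g}\times S^2) \# 2k\,\overline{\CP}_2$; only the minimal case $k=0$ truly needs this, since a single blow-up already erases the spin/non-spin distinction among ruled surfaces. Equivalently, this classification can be extracted from the known description of real ruled surfaces with empty real locus \cite{russo,ctcdp,kolreal}.

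For the converse I would, for each $(k,\mathfrak g)$, build a $\sigma$-invariant metric of positive scalar curvature and then descend it. When $k=0$, the product of a large-curvature round $S^2$ with a constant-curvature metric on $\Sigma_{\mathfrak g}$ -- taken invariant under a fixed-point-free real structure $\bar\sigma$, which exists for every genus -- is K\"ahler of constant positive scalar curvature, hence already satisfies $\delta W^+=0$ and $\det(W^+)>0$ and is preserved by $\bar\sigma\times\mathfrak a$. The hard part will be $k\geq 1$: there I would blow up $k$ conjugate pairs and invoke the existence of $\sigma$-invariant extremal K\"ahler metrics on these ruled blow-ups, after which Derdzi\'nski's observation turns $s^{-2}g$ into a metric with $\delta W^+=0$ and $\det(W^+)>0$ that descends to $M$ with $b_+(M)=0$. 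The real obstacle I anticipate is guaranteeing that such an extremal metric exists \emph{with scalar curvature positive at every point}, so that the conformal factor $s^{-2}$ is globally defined; this is exactly where the machinery of extremal K\"ahler metrics on admissible ruled manifolds would have to be deployed.
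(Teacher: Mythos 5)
Your reduction---promoting \eqref{home} to $\det(W^+)>0$, running the Weitzenb\"ock dichotomy (which indeed only needs $\delta W^+=0$), using $b_+(M)=0$ to force the double-cover alternative, and concluding from $s>0$ that $\widetilde{M}$ is rational or ruled---is sound, and is essentially how the paper proceeds (it simply quotes \cite[Proposition 3.2]{lebdet} and \cite{yauruled}). But your classification step contains two genuine errors. First, the parity obstruction rules out $\sigma$-\emph{invariant} $(-1)$-curves, yet the inference ``so every exceptional curve occurs in a disjoint conjugate pair'' is false: freeness of $\sigma$ only forces $E\cdot\sigma(E)$ to be \emph{even}, and $E\cdot \sigma(E)=2$ really occurs. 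On a degree-two del Pezzo surface with empty real locus---precisely the cover arising in the paper's $b_2(M)=3$ case---each real bitangent of the branch quartic (classically, there are exactly four) lifts to a conjugate pair of $(-1)$-curves meeting in two points. So your equivariant blow-down stalls unless you show that \emph{some} disjoint conjugate pair exists whenever $\widetilde{M}$ is not geometrically ruled; that is the actual content of the real two-dimensional minimal model program in \cite{kolreal,russo,ctcdp}, not a consequence of your parity lemma. Second, excluding the non-spin minimal ruled surface by ``applying the obstruction to a negative section'' fails twice over: the obstruction applies only to $\sigma$-\emph{invariant} curves, and nothing makes a negative section invariant; worse, for $\mathbb{P}(V)\to \Sigma_{\mathfrak{g}}$ with $V$ a stable bundle of odd degree there are \emph{no} negative sections at all, since every line subbundle $L\subset V$ gives a section of self-intersection $\deg V-2\deg L>0$. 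The paper's exclusion is instead purely topological, via the variant of Lemma~\ref{halte}: $\tau(M)=\frac{1}{2}\tau(\widetilde{M})=0$ and $b_+(M)=0$ make $H^2(M,\ZZ)$ finite, while $H^2(\Sigma_{\mathfrak{g}}\rtimes S^2,\ZZ)$ is torsion-free, so the pulled-back spin$^c$ class vanishes and the non-spin cover would have to be spin---a contradiction that needs no invariant curves and no holomorphic data, and is exactly where $b_+(M)=0$ earns its keep.

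On the converse, your $k=0$ construction is correct and agrees in substance with the paper's. For $k\geq 1$, however, the gap you flag yourself is the crux, and it is real: Derdzi\'nski's trick only requires a $\sigma$-invariant K\"ahler metric with $s>0$ \emph{everywhere} on the $2k$-point blow-up, but extremal-K\"ahler machinery is both more than is needed and unreliable here, since extremal metrics on such ruled blow-ups need not have positive scalar curvature, so your program would still owe an argument for positivity. The paper's construction is elementary and local: equip $\Sigma_{\mathfrak{g}}\times \CP_1$ with a product K\"ahler metric having $s>0$ globally and positive holomorphic sectional curvature on an invariant open set $U\times \CP_1$, blow up $k$ conjugate pairs of points inside that set, and perform Hitchin's modification \cite{hitpos}, adding $i\mathbf{t}\,\partial\bar{\partial}f(\mathsf{r})$ to the K\"ahler form in small balls, symmetrically at conjugate points; for sufficiently small $\mathbf{t}>0$ this yields a $\sigma$-invariant K\"ahler metric of positive scalar curvature, after which $h=s^{-2}g$ descends to $M$ with $\delta W^+=0$ and $\det(W^+)>0$. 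Replacing your extremal-metric appeal by this (or any other) equivariant positive-scalar-curvature K\"ahler construction is what closes the converse.
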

\begin{proof}
By \cite[Proposition 3.2]{lebdet}, any such manifold $(M,h)$ is double-covered by a 
complex surface $(\widetilde{M}, J)$ on which the 
pull-back $\widetilde{h}$  of $h$ becomes conformal to a $J$-compatible  K\"ahler metric $g$ of positive scalar 
curvature $s$. 
By \cite{yauruled}, this implies that $(\widetilde{M}, J)$  has Kodaira dimension $-\infty$, and so is rational or ruled. 
Since  the signature  $\tau (\widetilde{M})= 2 \tau (M)$ must be even, it follows that $\widetilde{M}$ is diffeomorphic to either 
 $(\Sigma_{\mathfrak{g}} \times S^2) \# 2k \overline{\CP_2}$ for some  $\mathfrak{g}, k \geq 0$, where $\Sigma_{\mathfrak{g}}$
 denotes the compact oriented surface of genus $\mathfrak{g}$, or  to the 
  non-spin oriented $2$-sphere bundle 
 $\Sigma_{\mathfrak{g}} \rtimes S^2$ over some $\Sigma_{\mathfrak{g}}$. However, the latter is excluded here 
  by a variant of the proof of Lemma \ref{halte}; indeed, since the putative 
  oriented $4$-manifold $M= [\Sigma_{\mathfrak{g}} \rtimes S^2]/\ZZ_2$ would  have signature $\tau=0$ and 
 $b_+=0$, its second cohomology group $H^2(M, \ZZ)$ would be finite, and, since $\mbox{Tor } H^2(\Sigma_{\mathfrak{g}} \rtimes S^2,\ZZ)
 =   \mbox{Tor }  H_1 (\Sigma_{\mathfrak{g}} \rtimes S^2, \ZZ)= 
 \mbox{Tor }  H_1 (\Sigma_{\mathfrak{g}}, \ZZ) =0$,   
 pulling back a 
 spin$^c$ structure on $M$ would  then  yield  a  spin structure on the non-spin manifold $\widetilde{M}\approx \Sigma_{\mathfrak{g}} \rtimes S^2$. This contradiction therefore shows  that $\widetilde{M}$
 must instead be diffeomorphic to $(\Sigma_{\mathfrak{g}} \times S^2) \# 2k \overline{\CP_2}$ for some 
 $\mathfrak{g}, k \geq 0$. 

Conversely, let   $\breve{\Sigma}= \#_{\mathfrak{g}+1}\mathbb{RP}^2$ 
be the connected sum of $\mathfrak{g}+1$ copies of the real projective
plane, and let $\breve{g}_1$ be a smooth Riemannian metric on $X$ that has positive Gauss curvature on some 
 non-empty open set  $\breve{U}\subset X$.  Let $\Sigma\to \breve{\Sigma}$
 be the oriented double cover of $\breve{\Sigma}$, let $g_1$ be the 
 pull-back of $\breve{g}_1$ to $\Sigma$,
and let \linebreak $j:T\Sigma \to T\Sigma$ be the  integrable almost-complex complex structure on $\Sigma$ induced by 
 $g_1$ and the orientation of $\Sigma$. The non-trivial deck transformation $\rad : \Sigma \to \Sigma$ now becomes
 a fixed-point-free    anti-holomorphic involution of the  genus-$\mathfrak{g}$ compact complex curve $(\Sigma, j)$, 
 while $g_1$ becomes a 
 a  $j$-compatible K\"ahler metric on $\Sigma$ that 
  has  Gauss curvature $\kappa > 0$ on the non-empty  $\rad $-invariant region $U$ that is  
 the pre-image of $\breve{U}$. If $g_0$ is the usual  unit-sphere metric on $S^2 = \CP_1$, 
 then the Riemannian product $(\Sigma, g_1) \times (S^2, \varepsilon g_0)$ will have  positive scalar 
 curvature provided we take $\varepsilon > 0$ to be small enough so that ${\varepsilon}^{-1} > -\min \kappa$.
 Moreover,  the product  K\"ahler metric $g_1\oplus \varepsilon g_0$ on $\Sigma \times \CP_1$
  has positive holomorphic section curvature on the open subset $U\times \CP_1$. 
 Let us  now endow $\Sigma\times \CP_1$ with the fixed-point-free anti-holomorphic involution $\rid := \rad \times \mathfrak{a}$, 
 where $\mathfrak{a}: S^2 \to S^2$ is once again the antipodal map of \eqref{reflect}, and,  for a given  $k\geq 0$, 
 choose $2k$ distinct points  $p_1, \ldots , p_{2k}\in U \times \CP_1$ such that $\rid (p_{2j-1}) = p_{2j}$ for $j=1, \ldots , k$. 
Letting  $\widetilde{M}$ denote  
 the blow-up of $\Sigma\times \CP_1$ at $p_1, \ldots , p_{2k}$, there is then a fixed-point-free anti-holomorphic involution 
 $\sigma : \widetilde{M}\to \widetilde{M}$ induced by $\rid$, and, using a result of Hitchin \cite{hitpos}, we  will now 
 construct a $\sigma$-invariant K\"ahler metric $\widetilde{g}$ of positive scalar curvature on $\widetilde{M}$. The recipe only calls for 
 changing  the previously constructed K\"ahler metric $g_1\oplus \varepsilon g_0$ within a disjoint union of the Riemannian balls
 of radius $\epsilon$ about the $p_j$, and modifies 
  the metric within these balls  by adding $i\mathbf{t}\partial \bar{\partial} f (\mathsf{r})$ to the K\"ahler form,
  where  $\mathsf{r}$ is the Riemannian distance from the center $p_j$, 
$f (\mathsf{r})$ is a gently modified version of  $\log \mathsf{r}$ that is constant for $\mathsf{r}> \epsilon$,
and the parameter $\mathbf{t}$ is any sufficiently  small positive constant. For  $\mathbf{t}>0$ sufficiently small,   Hitchin's computation  then shows that 
this modified  K\"ahler metric $\widetilde{g}$ has  positive  scalar curvature everywhere,  because 
our background metric has positive  holomorphic sectional curvature in the modification region, and positive  scalar curvature 
 everywhere else. On the other hand, since we have carefully arranged for this K\"ahler metric 
$\widetilde{g}$ on the blow-up to be  $\sigma$-invariant, our $\widetilde{g}$ is   automatically   the pull-back of a unique Riemannian metric 
$g$  on the oriented $4$-manifold $M:= [(\Sigma\times \CP_1) \# 2k \overline{\CP}_2]/\langle \sigma \rangle$. However, 
by construction, $g$ has scalar curvature $s> 0$, and is moreover everywhere locally isometric to a K\"ahler metric $\widetilde{g}$. 
An observation of Derdzi\'nski \cite{bes,derd}
thus shows that  the conformally rescaled metric $h= s^{-2}g$  therefore has   $\det (W^+) > 0$ and  $\delta W^+=0$ at every point of $M$. 
\end{proof}

While the above enumeration of the  possibilities for   $\widetilde{M}$  is similar in  spirit
to our discussion of the  Einstein case, Theorem \ref{alfine}  is certainly far  weaker than our main results. First of all, we have not tried to classify the possible $\ZZ_2$-actions that arise, although it seems clear that that there  must be many of them. 
Second,  in stark contrast to the Einstein case, 
the moduli spaces of solutions to the  weaker  equation $\delta W^+=0$ consistently turn out to be 
{\em infinite dimensional} in the present context, and nothing substantial seems to be known  concerning whether or not they are connected. 
We leave these open questions for the  reader's further consideration, in the hope that this will  stimulate further research, 
and eventually  lead to definitive answers.

\pagebreak 
%

\end{document}